\theoremstyle{plain}
\newtheorem{theorem}{Theorem} [section]
\newtheorem{lemma}{Lemma}[section]
\newtheorem{corollary}{Corollary} [section]
\newtheorem{proposition}{Proposition}[section]
\newtheorem{definition}{Definition}[section]
\title{The Partition Dimension of Corona Product of Complete and Wheel Graph}
\author{M.A.Yusuf, Hasmawati$^{}\footnote{Corresponding author (hasmawati@unhas.ac.id)}$, M.R.Hamidi, A.M.Anwar}
\date{}
\begin{document}
	\maketitle
	\footnotesize {\it Departments of Mathematics,
Hasanuddin University,
Makassar, Indonesia}\\
	\begin{abstract}
		The graph $G$ is a pair of sets $(V(G),E(G))$, where $V(G)$ is a finite set whose elements are called vertices, and $E(G)$ is a set of pairs of members of  $V(G)$, which is called the edges. Let G be a simple graph. For an ordered k-partition $\prod=\{S_1,S_2,…,S_k\}$ of $V(G)$, the representation of $u$ with respect to $\prod$ is k-ordered pairs, $r(u|\prod)=(d(u,S_1 ),d(u,S_2 ),\cdots,d(u,S_k ))$. The partition $\prod$ is called a resolving partition of $G$ if $r(u|\prod)\neq r(v|\prod)$ for all distinct $u,v\in V(G)$. The resolving partition $\prod$ with the minimum cardinality is called minimum resolving partistion. The partition dimension of $G$, denotaded $pd(G)$ is the cardinality of a minimum resolving partition of $G$. In this research, we will determine the partition dimension of corona product of complete graph by using some mathematical statements about resolving partition, consept of equivalent vertices, and same level vertices. There will be several analysis result for the $K_n\bigodot W_m$  vertices referring to equivalent vertices and same level vertices consept. The result show that for $m=n$ $pd(K_n\bigodot W_m )=n ,n\geq3$, for $m=n+1$,  $pd(K_n\bigodot W_m )=3$ ,$n=3$ and  $pd(K_n\bigodot W_m )=n n\geq3$, and for $m=n+2$ ,$pd(K_n\bigodot W_m )=4$,$n=2,3$ and $pd(K_n\bigodot W_m )=n$,$n\geq4$. 
		
\vspace*{5pt}
		\noindent \textbf{Keyword:} : resolving partition, corona product, complete graph, wheel graph, equivalent vertices, same level vertices.
	\end{abstract}
	
	\section{Introduction}
	The partition dimension was first introduced in 1998 by Chartrand et al., who grouped all the vertices in the graph G into a partition class and determined the representation of each vertex to each partition class [1]. Research related to the theory and determination of the partition dimensions of a graph has been carried out by many researchers. However, the partition dimensions for any graph have not been determined until now. Partition dimensions for certain graphs have been studied by many researchers,for instance, 
\cite{Tomescu84,Tomescu308,Baskoro66,Baskoro06,Yero331,Darmaji}.

	 Let $G =(V(G),E(G))$ be a simple graph. The order of graph G denoted by $|V(G)|$, is the number of vertices on G and size of G denoted by $|E(G)|$, is the number of edges of G. Let $u,v \in V(G)$ be two different vertices in $G$, $uv \in E(G)$ means that $u$ and $v$ are adjacent vertices. The neighborhood $N_G (u)$ of vertex $u$ is the set of vertices adjacent to $u$ in $G$. The degree of u in G denoted by $deg(u)$ is the number of the neighborhood $N_G (u)$. The path on graph $G$ is a series of vertices and edges $v_1,e_1,v_2,e_2,...,v_(n-1),e_(n-1),v_n$  with $e_i  = v_i v_{(i+1)}$,$i = 1,2,...,n-1$. If every two vertices $u$ and $v$ there is always a path containing vertices $u$ and $v$, then the graph $G$ is said to be a connected graph [8]. The length of the path is the number of edges in path. The distance $d(u,v)$ between two vertices $u,v \in V (G)$ is the length of a shortest path between them. For a subset of vertices $S \subset V(G)$, the distance between $u \in V(G)$ denoted by $d(u,S)$ is defined as min$\{d(u,x);x \in s\}$. So, for every $u \in S,d(u,S)=0.$

	 Given an ordered partition $\prod = \{S_1,S_2,...,S_k\}$ of vertices of a graph $G$. The representation of $v \in V(G)$ with respect to $\prod$ denoted by $r(v|\prod )$ is an ordered $k-$tuple $(d(v,S_1),d(v,S_2),...,d(v,S_k))$. An ordered partition $\prod$ is a resolving partition from $V(G)$ if for every $u,v \in V(G)$ has a different representation of $\prod$, that is, $r(u|\prod) \neq  r(v|\prod)$. The resolving partition $\prod$ with the minimum cardinality is called the minimum resolving partition. The partition dimension $pd(G)$ of graph $G$ is the cardinality of the minimum resolving partition of $G$.

	 The complete graph $K_n$ is a simple graph in which every two vertices in $G$ are adjacent, that is for every $u,v \in V(G)$ then $uv \in (G)$. The wheel $W_n$ with $n \geq 3$ is the graph $C_n+K_1$ obtained by adding one center point $x$ and $x$ is adjacent to all vertices in the cycle $C_n$. Thus $W_n$ contains $n+1$ vertices. Let $G$ and $H$ be two connected graphs of order n and m, respectively. The corona product $G \odot H$ is defined as the graphs obtained by copying graph $H$ until the number of graph $H$ is equal to $|V(G)|$, namely $H_1,H_2,...,H_n$ and joining by an edge of every vertex in graph $H_i$ with the $ith-$vertex of $G$. In this paper, we consider the partition dimension of corona product of complete graph $K_n$ and wheel graph $W_m$ where $m=n,m=n+1$, and $m=n+2$ for any integer $n \geq 3$.
	\section{Definitions and Theorms}
	There are several known basic result on the partition dimension of a graph were established. In \cite{Hasmawati-08-22}, Hasmawati, \textit{et al.} established the following definition.
	\begin{definition}
	Let $G$ be a connected graph and $u,v\in{V(G)}$. The vertices $u$ and $v$ are said equivalent, if the vertices statisfy one of the following properties :
\begin{enumerate}
\item $d(u,w)=d(v,w)$ for every $w\in{V(G)}/{\lbrace u,v \rbrace}$,
\item there exist $c\in{V(G)}$ such that $d(u,c)+d(c,s)=d(v,c)+d(c,s)$ for every $s\in{V(G)}/{\lbrace u,v \rbrace}$.
\end{enumerate}
	\end{definition}
	
In Definition 2.1, two vertices $u$ and $v$ that satisfy condition 1 are called strong equivalent vertices and the other are called weak equivalent.
\begin{theorem}
Let $G$ be a connected graph with ordered partition $\Pi$ of vertices of a graph $G$. If $\Pi$ is a resolving partition of $V(G)$ and $u,v\in{V(G)}$  are equivalent in $G$, then $u$ and $v$ or their neighbours respectively belong to distinct class partition  of $\Pi$ . 
\end{theorem}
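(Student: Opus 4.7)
The plan is to argue by contradiction, i.e.\ to show that if $\Pi = \{S_1, \ldots, S_k\}$ fails to separate $u$ from $v$ and also fails to separate the relevant neighbours of $u$ from those of $v$, then $r(u\mid\Pi) = r(v\mid\Pi)$, contradicting the assumption that $\Pi$ resolves $V(G)$. The argument naturally splits according to the dichotomy of Definition 2.1: the strong-equivalence case and the weak-equivalence case.

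The strong case is essentially immediate. If $d(u,w) = d(v,w)$ for every $w \in V(G)\setminus\{u,v\}$ and we place $u,v$ both in the same class $S_i$, then $d(u,S_i) = d(v,S_i) = 0$ and, for every $j \neq i$,
\[
d(u,S_j) \;=\; \min_{w \in S_j} d(u,w) \;=\; \min_{w \in S_j} d(v,w) \;=\; d(v,S_j),
\]
so all coordinates of the representations coincide. In this case the neighbour clause is automatically satisfied, since separating $u$ from $v$ already prevents the collision.

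The weak case is where the hypothesis on neighbours does real work, and this is the part I expect to be the main obstacle. Here I would, for each class $S_j$ not containing $u$ or $v$, consider the neighbour $u' \in N(u)$ through which the shortest path from $u$ to $S_j$ leaves, and analogously $v' \in N(v)$ for $v$. Using the witness $c$ and the identity $d(u,c) + d(c,s) = d(v,c) + d(c,s)$, one rewrites the distances $d(u,S_j)$ and $d(v,S_j)$ as distances along paths through $c$; if the corresponding neighbours $u',v'$ also fail to be separated by $\Pi$, these rewritten expressions collapse to the same value and again $r(u\mid\Pi) = r(v\mid\Pi)$, producing the contradiction. The delicate point is bookkeeping — matching the nearest-neighbour realisers on the $u$ and $v$ sides and showing that the $c$-based identity forces their contributions to cancel — rather than any genuinely new inequality; once that pairing is in place, the contradiction closes exactly as in the strong case.
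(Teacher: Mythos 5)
First, note that the paper itself never proves this statement: Theorem 2.1 is quoted without proof (it is imported from the cited work of Hasmawati et al.), and the only argument the paper actually supplies in this direction is Lemma 3.1, which is precisely your strong case. That part of your proposal is correct and coincides with the paper's Lemma 3.1: placing strong-equivalent $u,v$ in a common class forces $d(u,S_j)=d(v,S_j)$ for every class and hence $r(u\mid\Pi)=r(v\mid\Pi)$, contradicting resolvability, and in that case the conclusion is that $u$ and $v$ themselves must be separated.

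The weak case, however, contains a genuine gap, and it is exactly where you anticipated trouble. The witness identity of Definition 2.1(2), $d(u,c)+d(c,s)=d(v,c)+d(c,s)$, cancels to $d(u,c)=d(v,c)$ and by itself carries no information about $d(u,s)$ versus $d(v,s)$: your plan to rewrite $d(u,S_j)$ and $d(v,S_j)$ as distances along paths through $c$ presupposes that $d(u,s)=d(u,c)+d(c,s)$ and $d(v,s)=d(v,c)+d(c,s)$, i.e.\ that the shortest paths to $s$ actually pass through $c$. That is not part of the hypothesis, and it fails precisely for the vertices that matter, namely those in or near $N(u)\cup N(v)$, whose distances to $u$ and $v$ are not realized through $c$; this is the very regime the neighbour clause of the theorem is meant to control, so the ``collapse'' step is circular. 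In addition, pairing ``the neighbour $u'$ through which the shortest path to $S_j$ leaves'' with a corresponding $v'$ is not well defined (shortest paths and realisers need not be unique), and knowing that $u'$ and $v'$ lie in a common class does not yield $\min_{w\in S_j}d(u,w)=\min_{w\in S_j}d(v,w)$. To close the argument you would need an intermediate claim of the form: for weak-equivalent $u,v$ one has $d(u,s)=d(v,s)$ for every $s\in V(G)\setminus(N(u)\cup N(v)\cup\{u,v\})$, so that the representations of $u$ and $v$ can differ only in coordinates realized by neighbours; then ``$u,v$ in the same class and $N(u),N(v)$ meeting exactly the same classes at the same minimal distances'' forces $r(u\mid\Pi)=r(v\mid\Pi)$ and gives the contradiction. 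That claim is what the paper implicitly uses later (e.g.\ in Corollary 3.1), but it does not follow from Definition 2.1(2) as literally stated, so as written your weak case does not go through.
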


\begin{definition}
Let $G$ be a connected graph and $u,v\in{V(G)}, k\in{\mathbb{N}}$. The vertices $u$ and $v$ are same level vertices  in $G$ if $d(u,x)=k$ , there exist $y\in{V(G)}$ such that $d(u,x)=d(v,y)=k$ and $|{\lbrace x\in{V(G)}:d(u,x)=k \rbrace}|=|{\lbrace y\in{V(G)}:d(v,y)=k \rbrace}|$. 
\end{definition}

\begin{corollary}
In Definition 2.2, if $x=y$, then the vertices $u$ and $v$ are strong equivalent and $N_G(u)=N_G(v)$.
\end{corollary}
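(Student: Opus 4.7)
The plan is to unpack what the hypothesis $x=y$ really asserts in the context of Definition 2.2. Reading the definition pointwise across each level $k$, for every vertex $x$ attaining $d(u,x)=k$ there is a partner $y$ with $d(v,y)=k$; the added assumption $x=y$ says that this partner can always be taken to be $x$ itself. Combined with the equality of cardinalities of the two $k$-level sets required by Definition 2.2, this forces
\[
\{w \in V(G) : d(u,w)=k\} \;=\; \{w \in V(G) : d(v,w)=k\}
\]
for each $k$ that occurs as a distance from $u$ in $G$.

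Having identified the level sets at every $k$, I would then deduce strong equivalence. Given any $w \in V(G)\setminus\{u,v\}$, set $k=d(u,w)$; by the set equality above, $w$ also lies in the $k$-level set of $v$, so $d(v,w)=k=d(u,w)$. This is exactly condition (1) of Definition 2.1, so $u$ and $v$ are strong equivalent.

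The neighborhood equality then follows by specializing to $k=1$: the set $N_G(u)$ coincides with the $1$-level set of $u$, while $N_G(v)$ coincides with the $1$-level set of $v$, and the paragraph above shows these two sets are equal; hence $N_G(u) = N_G(v)$.

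The main obstacle here is interpretational rather than computational. Definition 2.2 is phrased somewhat ambiguously with respect to the scope of $k$ and the quantification over $x$, so the bulk of the work is justifying the reading of ``$x=y$'' as a pointwise condition that identifies the two level sets at every distance. Once that reading is fixed, the corollary reduces to reading off the definitions with no further calculation.
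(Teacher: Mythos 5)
Your proposal is correct and follows essentially the same route as the paper: both read the hypothesis $x=y$ as saying $d(u,x)=d(v,x)$ for every $x$, conclude strong equivalence from condition (1) of Definition 2.1, and then obtain $N_G(u)=N_G(v)$ by specializing to distance $1$ (the paper phrases this last step as a double inclusion, you phrase it as equality of the $1$-level sets, which is the same thing). Your extra remark invoking the cardinality clause to upgrade the containment of level sets to equality is harmless but not needed, since the pointwise identity $d(u,w)=d(v,w)$ already yields it.
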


\begin{proof}
Since $x=y$, we have if $d(u,x)=k$, there exist $y=x\in{V(G)}$ such that $d(u,x)=d(v,x)=k$. So, for every $x\in{V(G)}/{\lbrace u,v \rbrace}$,$ d(u,x)=d(v,x)$.  Hence $u$ and $v$ are strong equivalent.

Since $u$ and $v$ are strong equivalent, we will show that $N_G (u)=N_G (v)$ by show that $N_G (u)\subseteq N_G (v)$ and $N_G (v)\subseteq N_G (u)$.
\begin{itemize}
\item Take any $x\in{V(G)}$ and let $x\in{N_G (u)}$. Then $d(u,x)=1$. Since $u$ and $v$ are strong equivalent in $G$, we have $d(v,x)=d(u,x)=1$. Hence $x\in{N_G (v)}$. Therefore  $N_G (u)\subseteq N_G (v)$.
\item Take any $y\in{V(G)}$ and let $y\in{N_G (v)}$. Then $d(v,y)=1$. Since $u$ and $v$ are strong equivalent in $G$, we have $d(u,y)=d(v,y)=1$. Hence $y\in{N_G (u)}$. Therefore  $N_G (v)\subseteq N_G (u)$.
\end{itemize}
Since it has been shown that $N_G(u)\subseteq N_G(v)$ and $N_G(v)\subseteq N_G(u)$, we have $N_G(u)=N_G(v)$.
\end{proof}

For instance, G. Chartrand \textit{et al.} \cite{Chartrand-59-00} established the following theorems.

\begin{theorem}
If $G$ is a graph of order $n\geq 3$ and  diameter $d$, then $g(n,d)\leq pd(G) \leq n-d+1$, with $g(n,d)$ as the least positive integer $k$ for which $(d+1)^{k}\geq n$.
\end{theorem}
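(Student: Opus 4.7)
The plan is to prove the two inequalities separately, since they are independent of one another.

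For the lower bound $g(n,d) \leq pd(G)$, I would take any resolving partition $\Pi = \{S_1, \ldots, S_k\}$ and count how many distinct representations it can possibly produce. Since the diameter of $G$ is $d$, for each vertex $v$ and each class $S_i$ we have $d(v,S_i) \in \{0, 1, 2, \ldots, d\}$, giving $d+1$ possible values per coordinate. Hence the total number of possible $k$-tuples in $\{0, 1, \ldots, d\}^k$ is at most $(d+1)^k$. Since $\Pi$ is resolving, the $n$ vertices of $G$ must have pairwise distinct representations, so $(d+1)^k \geq n$. By the definition of $g(n,d)$ as the \emph{least} such $k$, we conclude $k \geq g(n,d)$, and taking $k = pd(G)$ yields the lower bound.

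For the upper bound $pd(G) \leq n - d + 1$, I would construct an explicit resolving partition with $n-d+1$ classes. Pick two vertices realizing the diameter, say $v_0$ and $v_d$ with $d(v_0, v_d) = d$, and fix a shortest $v_0$--$v_d$ path $v_0, v_1, \ldots, v_d$. By the triangle inequality together with the fact that this path is a geodesic, one checks $d(v_0, v_i) = i$ for every $0 \leq i \leq d$. Label the remaining $n - d - 1$ vertices $w_1, \ldots, w_{n-d-1}$ and define
\[
S_1 = \{v_0\}, \quad S_2 = \{v_1, v_2, \ldots, v_d\}, \quad S_{2+j} = \{w_j\} \text{ for } 1 \leq j \leq n-d-1.
\]
This is a partition into $2 + (n-d-1) = n-d+1$ classes.

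To verify it is resolving, I would compare representations case by case: the vertices $v_1, \ldots, v_d$ all lie in $S_2$ but have pairwise distinct first coordinates $d(v_i, S_1) = i$; the vertex $v_0$ is the unique vertex with a $0$ in coordinate $1$; and each singleton $w_j$ is the unique vertex with a $0$ in coordinate $2+j$, so the positions of the zeros already separate the three groups of vertices from each other. The main subtlety, and what I expect to be the only non-trivial step, is the distance computation $d(v_0, v_i) = i$: it requires confirming that no shortcut exists in $G$ between $v_0$ and $v_i$, which follows from combining the diameter bound $d(v_i, v_d) \leq d-i$ with the diametrical equality $d(v_0, v_d) = d$ via the triangle inequality. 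Once that identity is established, the verification reduces to inspecting the location of zero entries, and the bound $pd(G) \leq n-d+1$ follows.
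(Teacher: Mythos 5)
Your proof is correct. Note that the paper itself gives no proof of this statement; it is quoted as a known result of Chartrand, Salehi and Zhang \cite{Chartrand-59-00}, and your argument is essentially the one in that reference: the counting bound $(d+1)^k\geq n$ for the lower estimate, and the partition consisting of a diametral geodesic's endpoint, the rest of the geodesic, and singletons for the upper estimate. The one point needing care, $d(v_0,v_i)=i$ along the geodesic, you justify correctly via the triangle inequality, so there is nothing to add.
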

	
	\section{Main Result}
	
			In this section, we determine the partition dimension of corona product of complete and wheel graph. In this case we present the partition dimension of graph $K_n\odot W_m$ where $m=n,m=n+1$, and $m=n+2$ for any integer $n \geq 3$, the label of vertex set and the edges set on $K_n\odot W_m$  respectively as follows:
			\[	V(K_n \odot W_m)= \{ V(K_n) \}b\cup \{\bigcup _{i \in V(K_n)} V(W^i_m)\} \]
					and \[E((K_n\odot W_m)=\{E(K_n )\}\cup \{\bigcup_{(i \in V(K_n )} E(W^i_m ))\} \cup \{u_iv_{i,j} |i \leq i < n, 0 \leq j \leq m\}\] where $W^i_m$ is the $i-$th multiplication of the graph $W_m$.
		
		For the need to prove the partition dimension of the graph $K_n\odot W_m$  a set of block vertices  $V( W^{i'}_m)$ is defined where $ V(W^{i'}_m)=V(W^i_m) \cup \{u_i\}$ for $1 \leq i \leq n$. This definition will be used to streamline the writing in the proof elaboration.
			
	\begin{lemma}
		Let $G$ be a connected graph with ordered partition $\Pi$ of vertices of a graph $G$. If $\Pi$ is a resolving partition of  $V(G)$ and $u,v\in{V(G)}$  are the strong equivalent in $G$, then $u$ and $v$ belong to distinct class partition of $\Pi$.
	\end{lemma}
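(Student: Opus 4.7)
The plan is to argue by contradiction, assuming $u$ and $v$ lie in the same class of $\Pi$, and then showing that strong equivalence forces their representation vectors to coincide, contradicting the hypothesis that $\Pi$ is resolving. This is in the same spirit as Theorem 2.1 but easier, because strong equivalence already pins down all the distances we need.

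Concretely, I would suppose for contradiction that there exists an index $i$ with $u,v\in S_i$. The $i$-th coordinate of both representations is then immediate: $d(u,S_i)=d(v,S_i)=0$. The task reduces to showing $d(u,S_j)=d(v,S_j)$ for every $j\neq i$, since that will give $r(u\mid\Pi)=r(v\mid\Pi)$.

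To handle an arbitrary $j\neq i$, I would choose a vertex $w\in S_j$ that realizes the minimum, i.e.\ $d(u,S_j)=d(u,w)$. The key small observation is that $w\notin\{u,v\}$, because $u,v\in S_i$ while $w\in S_j$ and $S_i\cap S_j=\emptyset$. This lets me apply condition~1 of Definition~2.1 (strong equivalence) to conclude $d(v,w)=d(u,w)$, hence
\[
d(v,S_j)\;\leq\;d(v,w)\;=\;d(u,w)\;=\;d(u,S_j).
\]
Reversing the roles of $u$ and $v$ (picking $w'\in S_j$ realizing $d(v,S_j)$) gives the reverse inequality, so $d(u,S_j)=d(v,S_j)$. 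Combined with the $i$-th coordinate, every coordinate agrees, contradicting the resolving property of $\Pi$.

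There is no real obstacle here; the only point requiring any care is the verification that the minimizer $w$ lies outside $\{u,v\}$, which is exactly what allows strong equivalence to be invoked. Since the two classes $S_i$ and $S_j$ are disjoint, this is automatic, and the whole argument is essentially a one-line consequence of Definition~2.1(1). The lemma can therefore be proved in a few lines without appealing to Theorem~2.1.
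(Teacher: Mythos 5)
Your proposal is correct and follows essentially the same route as the paper's proof: assume $u,v$ share a class, note the common zero coordinate, and use strong equivalence ($d(u,w)=d(v,w)$ for all $w\neq u,v$) to force equality of all remaining coordinates, contradicting the resolving property. Your version is slightly more careful than the paper's in explicitly passing from pointwise equality of distances to equality of the minima $d(u,S_j)=d(v,S_j)$, but the underlying argument is the same.
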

	\begin{proof}
Let $\Pi =\lbrace S_1,S_2,…,S_k \rbrace$ be a resolving partition of $V(G)$. It means that there exist $i\in{\lbrace 1,2,…,k \rbrace},d(u,S_i)\neq d(v,S_i)$ for every $u,v\in{V(G)}$. Let the vertices $u$ and $v$  are the strong equivalent in $G$ and suppose there exist $j\in{\lbrace 1,2,…,k \rbrace}$ such that $u,v\in{S_j}$, then $d(u,S_j )=d(v,S_j )=0$. Next, take any $w\in{V(G)/{\lbrace u,v \rbrace}}$ and let $w\in{S_i}$. Since the vertices $u$ and $v$ are the strong equivalent in $G$, we have $d(u,w)=d(v,w)$. Hence, $d(u,S_i )=d(v,S_i)$. As a result, $r(u|\Pi)=r(v|\Pi)$. So, $\Pi$ is not a resolving partition of $V(G)$, a contradiction. Therefore, if  the vertices $u$ and $v$ are the strong equivalent in $G$, then $u$ and $v$ belong to distinct class partition of $\Pi$. 
\end{proof}

\begin{proposition}
	Given a graph $K_n\odot W_{n+1}$ with $V(K_n\odot W_{n+1} )=\lbrace u_i,v_{i,j} :1\leq i\leq n,0\leq j\leq n+1\rbrace $ and $E(K_n \odot W_{n+1} )=\lbrace u_i u_{i+1},u_i u_{i+2},…,u_i u_n : 1\leq i \leq n-1 \rbrace$ $\cup \lbrace u_i v_{i,j} :1\leq i \leq n,0\leq j \leq n+1\rbrace$ $\cup \lbrace v_{i,0} v_{i,j} : 1\leq i\leq n,1\leq j\leq n+1 \rbrace$ $\cup \lbrace v_{i,j} v_{i,j+1}:1\leq i \leq n+1,1\leq j \leq n+1 \rbrace$ $\cup \lbrace v_{i,1} v_{i,n+1}:1\leq i\leq n \rbrace$, then for every $i\in{\lbrace 1,2,…,n \rbrace }$ the vertices $v_{i,j}$ and $v_{i,k}$ with $j\neq k$,$j,k\in{\lbrace 1,2,…,n+1 \rbrace}$ are the weak equivalent. 
\end{proposition}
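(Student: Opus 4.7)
The plan is to exhibit a single vertex $c$ that witnesses condition 2 of Definition 2.1 for every pair $v_{i,j}, v_{i,k}$ inside the $i$-th wheel block, and then confirm that condition 1 of the same definition fails so that the pair indeed lies in the ``weak'' class rather than the ``strong'' one.

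The natural symmetry point is either the hub $v_{i,0}$ of the wheel $W^i_{n+1}$ or the complete-graph vertex $u_i$; both are adjacent to every rim vertex of $W^i_{n+1}$ by the corona construction. Taking $c=u_i$, I would observe that $d(v_{i,j},c)=d(v_{i,k},c)=1$, so for any $s\in V(K_n\odot W_{n+1})\setminus\{v_{i,j},v_{i,k}\}$ the two sides of the equality in condition 2 both collapse to $1+d(u_i,s)$ and are trivially equal. This is the core of the argument: the choice of $c$ is the same for every $s$ and does not depend on which two rim indices $j,k$ are considered.

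To check that the pair is not strong equivalent, I would exhibit a rim vertex of the same cycle that distinguishes them. Since the cycle $v_{i,1},\ldots,v_{i,n+1}$ has length $n+1\geq 4$, at least one cycle neighbour $w\in\{v_{i,j-1},v_{i,j+1}\}$ of $v_{i,j}$ (indices reduced in $\{1,\ldots,n+1\}$) is different from $v_{i,k}$ and not adjacent to $v_{i,k}$ on the cycle; then $d(v_{i,j},w)=1$ while $d(v_{i,k},w)=2$ (whether one routes through $v_{i,0}$, through $u_i$, or along the cycle), so condition 1 fails. The main bookkeeping obstacle is precisely this short case analysis, namely confirming that such a $w$ can always be chosen regardless of the relative position of $j$ and $k$ on the cycle, and this is where the hypothesis $n\geq 3$ (hence cycle length at least $4$) is used; everything else is immediate from the adjacency pattern of the corona product $K_n\odot W_{n+1}$.
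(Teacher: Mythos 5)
Your main argument is essentially the paper's: the paper verifies condition 2 of Definition 2.1 by taking the witness $c=v_{i,0}$ (the hub of the $i$-th wheel), noting that $d(v_{i,j},v_{i,0})=d(v_{i,k},v_{i,0})=1$ so that both sides of the identity reduce to $1+d(v_{i,0},s)$ for every $s$. Your choice $c=u_i$ works for exactly the same reason (it too is adjacent to every vertex of $W^i_{n+1}$ by the corona construction), so that part of your proposal is correct and interchangeable with the paper's.

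The part you added --- checking that condition 1 \emph{fails} so the pair is genuinely ``weak'' rather than ``strong'' --- is not in the paper's proof, and it contains a genuine error. Your claim that some cycle neighbour $w$ of $v_{i,j}$ is not adjacent to $v_{i,k}$ breaks down when the rim cycle has length $4$, i.e.\ when $n=3$: for the antipodal pair $v_{i,1},v_{i,3}$ in $K_3\odot W_4$, both cycle neighbours of $v_{i,1}$ (namely $v_{i,2}$ and $v_{i,4}$) are also neighbours of $v_{i,3}$, and in fact $N(v_{i,1})=N(v_{i,3})=\{v_{i,0},u_i,v_{i,2},v_{i,4}\}$, so these two vertices are \emph{strong} equivalent --- a fact the paper itself invokes in the proof of Theorem 3.2. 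So no distinguishing $w$ exists there, and ``cycle length at least $4$'' is not the right hypothesis; you would need cycle length at least $5$ (equivalently $n\geq 4$) together with a check that $j$ and $k$ are not antipodal. The paper avoids this issue entirely by proving only condition 2 and never arguing that condition 1 fails; if you want your stronger statement, you must either exclude $n=3$ antipodal pairs or accept (as the paper implicitly does) that ``weak equivalent'' here is being used loosely to mean ``satisfies condition 2.''
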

\begin{proof}
or any $i\in{\lbrace 1,2,…,n \rbrace }$ and $j,k\in{\lbrace 1,2,…,n+1 \rbrace}$ with $j \neq k$ there exist $v_{i,0}\in {V(K_n \odot W_{n+1})}$ such that for every $ s \in {V(K_n \odot W_{n+1})}/$ $\lbrace v_{i,j},v_{i,k} \rbrace$, $d(v_{i,j},v_{i,0}) +d(v_{i,0},s)$ $=d(v_{i,k},v_{i,0} )$ $+d(v_{i,0},s)$ .
\end{proof}

\begin{corollary}
Given a graph $K_n\odot W_{n+1}$ with $V(K_n\odot W_{n+1} )=\lbrace u_i,v_{i,j} :1\leq i\leq n,0\leq j\leq n+1\rbrace $ and $E(K_n \odot W_{n+1} )=\lbrace u_i u_{i+1},u_i u_{i+2},…,u_i u_n : 1\leq i \leq n-1 \rbrace$ $\cup \lbrace u_i v_{i,j} :1\leq i \leq n,0\leq j \leq n+1\rbrace$ $\cup \lbrace v_{i,0} v_{i,j} : 1\leq i\leq n,1\leq j\leq n+1 \rbrace$ $\cup \lbrace v_{i,j} v_{i,j+1}:1\leq i \leq n+1,1\leq j \leq n+1 \rbrace$ $\cup \lbrace v_{i,1} v_{i,n+1}:1\leq i\leq n \rbrace$,if $\Pi$ is a resolving partition on graph $K_n \odot W_{n+1}$, then the vertices $v_{i,j}$ and $v_{i,k}$ or the vertices $v_{i,j+1},v_{i,k+1},v_{i,j-1},v_{i,k-1}$  belong to distinct class partition of $\Pi$.
\end{corollary}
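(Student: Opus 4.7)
The plan is to obtain this corollary as an almost immediate consequence of Theorem 2.1, once Proposition 3.1 has supplied the equivalence. First, for fixed $i\in\{1,2,\ldots,n\}$ and any $j\neq k$ in $\{1,2,\ldots,n+1\}$, Proposition 3.1 tells us that $v_{i,j}$ and $v_{i,k}$ are weak equivalent in $K_n\odot W_{n+1}$; by Definition 2.1 this makes them equivalent vertices, so they satisfy the hypothesis of Theorem 2.1.

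Next, I would read the relevant neighborhoods directly off the edge list of $K_n\odot W_{n+1}$. Because $v_{i,\ell}$ (for $\ell\in\{1,\ldots,n+1\}$) lies on the rim of the wheel $W^i_{n+1}$, its neighbors in the whole graph are exactly the apex $u_i$, the hub $v_{i,0}$, and its two cycle neighbors $v_{i,\ell-1}$ and $v_{i,\ell+1}$, where indices are interpreted modulo $n+1$ using the closing edge $v_{i,1}v_{i,n+1}$. Consequently $\{u_i,v_{i,0}\}\subseteq N(v_{i,j})\cap N(v_{i,k})$, and the only neighbors that differ between $v_{i,j}$ and $v_{i,k}$ are the four cycle vertices $v_{i,j-1},v_{i,j+1},v_{i,k-1},v_{i,k+1}$.

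I would then apply Theorem 2.1 with $u=v_{i,j}$ and $v=v_{i,k}$. The theorem gives the dichotomy: either $v_{i,j}$ and $v_{i,k}$ themselves lie in distinct classes of $\Pi$, or a pair of their respectively corresponding neighbors does. The shared neighbors $u_i$ and $v_{i,0}$ are automatically placed in a single class each, and so they cannot be the pair that separates the two representations; hence the only way the second alternative of Theorem 2.1 can be realized is through the rim neighbors $v_{i,j+1},v_{i,k+1},v_{i,j-1},v_{i,k-1}$, which is exactly the conclusion of the corollary.

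The main obstacle I anticipate is making precise what the word \emph{respectively} in Theorem 2.1 demands in this wheel setting: one has to argue that if none of the corresponding rim pairs $(v_{i,j-1},v_{i,k-1})$ or $(v_{i,j+1},v_{i,k+1})$ is split across two classes, then the coordinates of $r(v_{i,j}\mid\Pi)$ and $r(v_{i,k}\mid\Pi)$ coming from every class $S_t$ must agree (because each shortest path from either $v_{i,j}$ or $v_{i,k}$ to a vertex outside $\{v_{i,j},v_{i,k}\}$ passes through a common neighbor or through one of these paired rim vertices), forcing $v_{i,j}$ and $v_{i,k}$ themselves to fall into distinct classes of $\Pi$.
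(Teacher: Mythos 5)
Your proposal is correct and follows essentially the same route as the paper: invoke Proposition 3.1 to get that $v_{i,j}$ and $v_{i,k}$ are weak equivalent, identify $v_{i,j\pm1}$ and $v_{i,k\pm1}$ as the relevant neighbours, and apply Theorem 2.1 to conclude the dichotomy. Your extra observation that the shared neighbours $u_i$ and $v_{i,0}$ cannot realize the second alternative (so only the rim neighbours remain) is a small refinement the paper leaves implicit, but it does not change the argument.
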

\begin{proof}
Base on Proposition 3.3, for any $i\in{\lbrace 1,2,…,n \rbrace}$ and any $j,k\in{\lbrace 1,2,…,n+1 \rbrace}$ with $j \neq k$, the vertices $v_{i,j}$ and $v_{i,k}$ are the weak equivalent. Note that the vertices $v_{i,j+1}$ and $v_{i,j-1}$ are the neighbour of $v_{i,j}$ and the vertices $v_{i,k+1}$ and $v_{i,k-1}$ are the neighbour of $v_{i,k}$. Since $\Pi$ is a resolving partition on graph $K_n \odot W_{n+1}$, then by Theorem 2.2, the vertices $v_{i,j}$ and $v_{i,k}$ or the vertices $v_{i,j+1},v_{i,k+1},v_{i,j-1},v_{i,k-1}$  belong to distinct class partition of $\Pi$ . 
\end{proof}

\begin{lemma}
		Let $G$ be a connected graph with ordered partition $\Pi$ of vertices of a graph $G$. Let the vertices $u$ and $v$ are same level vertices on $G$ and there exist $x,y\in {V(G)}$ such that $d(u,x)=d(v,y)=k$. If $\Pi$ is a resolving partition of  $V(G)$ and $k=\min{ \lbrace d(u,a):a\in {S},{S}\in {\Pi} \rbrace},$ then the vertices $u$ and $v$ or $x$ and $y$ belong to distinct class partition  of $\Pi$ .
\end{lemma}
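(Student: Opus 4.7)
The strategy is proof by contradiction, paralleling the argument given for Lemma~3.1. I would suppose that the conclusion fails, i.e., that $u,v$ lie in a common class $S_j\in\Pi$ \emph{and} $x,y$ lie in a common class $S_i\in\Pi$, and then derive $r(u|\Pi)=r(v|\Pi)$, contradicting the assumption that $\Pi$ is a resolving partition.

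First, the two ``easy'' coordinates. For $S_j$, immediately $d(u,S_j)=d(v,S_j)=0$ since both vertices lie in $S_j$. For $S_i$, the witnesses $x,y$ give $d(u,S_i)\le d(u,x)=k$ and $d(v,S_i)\le d(v,y)=k$; the minimality hypothesis, read as $d(u,S_i)=k$, combined with the same-level relation between $u$ and $v$ (in particular, the existence of a distance-$k$ realiser $y\in S_i$ for $v$), forces $d(v,S_i)=k$ as well, so the $i$-th coordinate of the representation also agrees.

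For each remaining class $S_\ell$ with $\ell\neq i,j$, I would exploit the minimality of $k$: by hypothesis $k$ is the smallest distance from $u$ to any vertex of any partition class (read as excluding $u$ itself from the competition), so $d(u,S_\ell)\ge k$. Combining this with the cardinality equality $|\{z:d(u,z)=k\}|=|\{z:d(v,z)=k\}|$ supplied by Definition~2.2, I would argue that $d(u,S_\ell)=d(v,S_\ell)$ for every $\ell$. Stringing the coordinate equalities together yields $r(u|\Pi)=r(v|\Pi)$, the desired contradiction.

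The main obstacle is precisely this last step. The same-level definition gives equality of \emph{cardinalities} of the distance-$k$ spheres around $u$ and $v$, not an explicit bijection between them that is compatible with the partition classes. I expect the genuine work to be a case analysis on which classes $u$ reaches at exactly distance $k$, followed by a transfer of that information to $v$ via the same-level hypothesis together with the minimality of $k$. Once this matching-by-classes step is in hand, the $S_i$ and $S_j$ coordinates and the final contradiction are routine bookkeeping.
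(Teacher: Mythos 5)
Your strategy is the same as the paper's: negate the conclusion (assume $u,v\in S_j$ and $x,y\in S_i$), then show coordinate by coordinate that $r(u|\Pi)=r(v|\Pi)$ and contradict resolvability. The $S_j$ and $S_i$ coordinates go through exactly as you describe. But the proposal stops short of a proof at precisely the point you flag yourself: you never establish $d(u,S_\ell)=d(v,S_\ell)$ for the remaining classes $S_\ell$ with $\ell\neq i,j$, deferring it to an unspecified ``matching-by-classes'' argument. That is a genuine gap, not routine bookkeeping: Definition~2.2 only equates the \emph{sizes} of the distance-$k$ spheres about $u$ and $v$; it says nothing about how those spheres are distributed over the partition classes, so nothing in the hypotheses forces a $v$-witness at distance $d(u,S_\ell)$ to land in $S_\ell$. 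Without that, the representations need not agree in the $\ell$-th coordinate and the contradiction does not follow.

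For comparison, the paper's own proof runs the device you used for $S_i$ over \emph{every} class: for an arbitrary class $S_\ell$ it picks $x\in S_\ell$ realising $k_\ell=\min\{d(u,a):a\in S_\ell\}$, invokes the same-level hypothesis to produce some $y$ with $d(v,y)=k_\ell$, and then simply writes ``let $y\in S_\ell$'', concluding $d(u,S_\ell)=d(v,S_\ell)=k_\ell$. In other words, the paper treats the negated conclusion as asserting that the witness pair for \emph{each} class lies in a common class; it never uses the cardinality condition of Definition~2.2 at all, and it also does not check that no vertex of $S_\ell$ is strictly closer to $v$ (which is needed to upgrade $d(v,S_\ell)\le k_\ell$ to equality). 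So the obstacle you identified is real, and the paper does not genuinely overcome it either. A sound version would have to build the class-compatibility of the witnesses into the hypothesis being contradicted, or verify it directly in the concrete graphs where the lemma is applied.
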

\begin{proof}
Let $\Pi= \lbrace S_1,S_2,…,S_k \rbrace $ be a resolving partition of $V(G)$. It means that there exist $i\in{ \lbrace 1,2,…,k \rbrace },d(u,S_i)\neq d(v,S_i)$ for every $u,v \in {V(G)}$. Let the vertices $u$ and $v$  are in  the same partition class of $\Pi$, said $S_i$, then $d(u,S_i )=d(v,S_i )=0$. Next, take any $j\in{ \lbrace 1,2,…,m \rbrace}$ and choose $x\in {V(G)}/ \lbrace u,v \rbrace $, $x \in {S_j}$ with $d(u,x)=k$. Since $k=\min{ \lbrace d(u,a):a\in {S_j} \rbrace}$ and the vertices $u$ and $v$ are same level vertices on $G$, so there exist $y\in{V(G)}$ such that $d(v,y)=k$. Let $y\in{S_j}$, then $d(u,S_j )=d(v,S_j )=k$. Since $S_j$ is an arbitrary partition class on $\Pi$, we have $d(u,S_j )=d(v,S_j )$ for every $j\in{ \lbrace 1,2,….,m \rbrace }$. As a result, $r(u| \Pi )=r(v| \Pi )$. So, $\Pi$ is not a resolving partition of $V(G)$, a contradiction. Therefore, the vertices $u$ and $v$ or $x$ and $y$ belong to distinct class partition  of $ \Pi $ .
\end{proof}

\begin{proposition}
Given a graph $K_n\odot W_{n+1}$ with $V(K_n\odot W_{n+1} )=\lbrace u_i,v_{i,j} :1\leq i\leq n,0\leq j\leq n+1\rbrace $ and $E(K_n \odot W_{n+1} )=\lbrace u_i u_{i+1},u_i u_{i+2},…,u_i u_n : 1\leq i \leq n-1 \rbrace$ $\cup \lbrace u_i v_{i,j} :1\leq i \leq n,0\leq j \leq n+1\rbrace$ $\cup \lbrace v_{i,0} v_{i,j} : 1\leq i\leq n,1\leq j\leq n+1 \rbrace$ $\cup \lbrace v_{i,j} v_{i,j+1}:1\leq i \leq n+1,1\leq j \leq n+1 \rbrace$ $\cup \lbrace v_{i,1} v_{i,n+1}:1\leq i\leq n \rbrace$,if $\Pi$ is a resolving partition on graph $K_n \odot W_{n+1}$, then :
\begin{enumerate}
\item the vertices $v_{i,0}$ and $v_{k,0}$ for $i\neq k$,   $i,k\in{\lbrace 1,2,…,n \rbrace }$ are same level vertices,
\item for every $j,m\in{\lbrace 1,2,…,n+1 \rbrace}$  the vertices $v_{i,j}$ and $v_{k,m}$ for $i,k \in{\lbrace 1,2,…,n \rbrace }$ are same level vertices,
\item the vertices $u_i$ and $u_j$ for $i\neq j$,$i,j\in{ \lbrace 1,2,…,n \rbrace }$ are same level vertices.
\end{enumerate}
\end{proposition}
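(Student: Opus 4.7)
The plan is to verify Definition 2.2 directly for each of the three claims by computing, for each of the three vertex types of $K_n\odot W_{n+1}$ (hubs $u_i$, wheel centers $v_{i,0}$, and rim vertices $v_{i,j}$ with $j\geq 1$), the full distance profile
\[
N_k(w):=|\{x\in V(K_n\odot W_{n+1}):d(w,x)=k\}|,
\]
and checking that this profile depends only on the type and not on the particular indices $i$ or $j$. Because the corona construction is invariant under permuting the $n$ copies $W^1_{n+1},\dots,W^n_{n+1}$, and because $W_{n+1}$ has a rotational automorphism acting transitively on its rim, one expects each profile to be index-free; the bulk of the proof is a careful tally.

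First I would record the pairwise distances. Within a single copy $W^i_{n+1}$, every vertex is at distance at most $2$ from every other, since $u_i$ and $v_{i,0}$ are both adjacent to all $n+1$ rim vertices. Across copies, every path from block $i$ to block $k\neq i$ must traverse a $K_n$-edge $u_iu_k$, so for any $a$ in block $i$ and $b$ in block $k$ one has $d(a,b)=d(a,u_i)+1+d(u_k,b)$. From this I can read off: $N_1(u_i)=(n-1)+(n+2)=2n+1$ and $N_2(u_i)=(n-1)(n+2)$, independent of $i$, giving claim 3. Likewise $N_1(v_{i,0})=n+2$, $N_2(v_{i,0})=n-1$, $N_3(v_{i,0})=(n-1)(n+2)$, all independent of $i$, giving claim 1. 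For a rim vertex $v_{i,j}$ with $j\geq 1$, the four neighbors are $u_i$, $v_{i,0}$, and the two cycle-neighbors $v_{i,j-1},v_{i,j+1}$ (indices modulo $n+1$), so $N_1=4$; the vertices at distance $2$ consist of the $n-2$ other rim vertices of block $i$ together with the $n-1$ foreign hubs $u_k$, giving $N_2=2n-3$; and the remaining $(n-1)(n+2)$ vertices lie in foreign blocks at distance $3$. These three counts are independent of both $i$ and $j$, yielding claim 2.

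The one delicate point to confirm is that the four neighbors listed for $v_{i,j}$ are genuinely distinct, which requires the cycle $C_{n+1}$ to have at least four vertices. Since $n\geq 3$ we have $n+1\geq 4$, so $v_{i,j-1}\neq v_{i,j+1}$, the count $N_1=4$ holds uniformly, and the rim-distance-$2$ count $n-2$ is nonnegative. With all three profiles matching within each pairing, the condition of Definition 2.2, namely that $|\{x:d(u,x)=k\}|=|\{y:d(v,y)=k\}|$ for every $k\in\mathbb{N}$, is fulfilled, and the three claims follow. The main obstacle is therefore not conceptual but bookkeeping: making sure the across-block contribution $(n-1)(n+2)$ is tracked consistently and that the cycle wrap-around at $j=1$ and $j=n+1$ does not collapse neighbors.
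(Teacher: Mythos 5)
Your proposal is correct and follows essentially the same route as the paper: enumerate the vertices at each distance $1,2,3$ from each of the three vertex types and check that the cardinalities are independent of the indices, which is exactly the verification of Definition 2.2 the paper carries out. (Your count $N_1(u_i)=(n-1)+(n+2)=2n+1$ is in fact the correct one; the paper's stated total $2n$ undercounts the $n+2$ vertices of the attached wheel copy, a slip that does not affect the conclusion.)
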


\begin{proof}
To prove the proposition we use the Definition 2.2
\begin{enumerate}
\item We will show that the vertices $v_{i,0}$ and $v_{k,0}$ for $i\neq k$,$i,k\in{\lbrace 1,2,…,n \rbrace}$ are same level vertices.
\begin{itemize}
\item For every $x_1 \in{\lbrace u_i,v_{i,j}:1\leq j \leq n+1 \rbrace }$,$d(v_{i,0},x_1 )= 1$, there exist $y_1\in{\lbrace u_k,v_{k,j} :1\leq j}$ $ \leq n+1 \rbrace $ such that $d(v_{i,0},x_1 )=d(v_{k,0},y_1 )=1$ and $|\lbrace u_i,v_{i,j}:1\leq j \leq n+1 \rbrace|=|\lbrace u_k,v_{k,j} :1\leq j\leq n+1 \rbrace | =n+2$,
\item for every $x_2\in{\lbrace u_l:u_l\in{V(K_n \odot W_{n+1}) }/u_i \rbrace }$,$d(v_{i,0},x_2)=2$, there exist $y_2\in{\lbrace u_m :u_m}$ $\in{V(K_n \odot W_{n+1} )/u_k} \rbrace $,  such that $d(v_{i,0},x_2 )=d(v_{k,0},y_2 )=2$ and $|\lbrace u_l : u_l\in{V(K_n \odot}$ $W_{n+1} )/u_i \rbrace|=| \lbrace u_m :u_m \in{V(K_n \odot W_{n+1} )/u_k} \rbrace |=n-1$,
\item for every $x_3\in{\lbrace v_{1,m} :v_{1,m} \in {V(K_n \odot W_{n+1})/v_{i,m}} \rbrace}$ for $m\in{\lbrace 1,2,...,n+1\rbrace},d(v_(i,0),x_3)=3$, there exist $y_3\in{\lbrace v_{c,m}:v_{c,m}\in {V(K_n \odot W_{n+1})/v_{k,m}}\rbrace}$ such that $d(v_{i,0},x_3)=d(v_{k,0},y_3)=3$ and $|\lbrace v_{1,m}:v_{1,m} in {V(K_n \odot W_{n+1})}/$ ${v_{i,m}}\rbrace|=|\lbrace v_{c,m}:v_{c,m}\in{V(K_n \odot W_{n+1})}/v_{k,m}\rbrace|=(n-1)(n-2)$
\end{itemize}
Hence, if $d(v_{i,0},w)=k$, there exist $s\in{V(K_n \odot W_{n+1} )}$ such that $d(v_{k,0},s)=k$ and $|\lbrace w:d(v_{i,0},w)=k \rbrace |= |\lbrace s:d(v_{k,0},s)=k \rbrace |$.
\item We will show that for every $j,m\in{\lbrace 1,2,…,n+1 \rbrace}$  the vertices $v_{i,j}$ and $v_{k,m}$ for $i,k\in{\lbrace 1,2,…,n \rbrace}$ are same level vertices,
\begin{itemize}
\item For every $x_1^{'}\in{\lbrace u_i,v_{i,0},v_{i,j\pm 1} \rbrace }$,$d(v_{i,j},x_1^{'} )=1$, there exist $y_1^{'}\in{\lbrace u_k,v_{k,0},v_{k,m \pm 1} \rbrace }$ such that $d(v_{i,j},x_1^{'} )=d(v_{k,m},y_1^{'} )=1$ and $|\lbrace u_i,v_{i,0},v_{i,j \pm 1} \rbrace|=|\lbrace u_k,v_{k,0},v_{k,m \pm 1} \rbrace |=4$.
\item for every $x_2^{'}\in{\lbrace u_l:u_l\in{V(K_n \bigodot W_{n+1} )/u_i} \rbrace }$ $\cup {\lbrace v_{i,j \pm 2},v_{i,j\pm 3},…,v_{i,n-2} \rbrace}$,$d(v_{i,j},x_2^{'} )=2$, there exist $y_2^{'}\in{\lbrace u_m:}$ $u_m\in{V(K_n \odot W_{n+1} )/u_k} \rbrace $ $\cup {\lbrace v_{k,m\pm 2},v_{k,m\pm 3},…,v_{k,n-2} \rbrace}$ such that $d(v_{i,j},x_2^{'} )=d(v_{k,l},y_2^{'} )=2$ and $|\lbrace u_l :u_l\in{V(K_n \odot W_{n+1} )/u_i} \rbrace \cup {\lbrace v_{i,j \pm 2},v_{i,j\pm 3},…,}$ $v_{i,n-2} \rbrace|=|\lbrace u_m :u_m \in{V(K_n \odot W_{n+1} )/u_k} \rbrace \cup{\lbrace v_{k,m\pm 2},v_{k,m\pm 3},…,v_{k,n-2} \rbrace}|$ $=2n-3$,
\item for every $x_3^{'}\in{\lbrace v_{l,m} :v_{l,m}\in {V(K_n \odot W_{n+1} )/v_{i,j}} \rbrace}$, for $m\in{\lbrace 0,1,2,…,n+1 \rbrace}$,$d(v_{i,j},$ $x_3^{'} )=3$, there exist $y_3^{'}\in {\lbrace v_{c,j} :v_{c,j}\in{V(K_3\odot W_{n+1} )/v_{k,m}} \rbrace}$ for $j\in{\lbrace 0,1,2,…,n+1 \rbrace}$ such that $d(v_{i,j},x_3^{'} )=d(v_{k,m},y_3^{'} )=3$ and $|\lbrace v_{l,m}:v_{l,m}\in {V(K_n \odot W_{n+1} )/v_{i,j}}\rbrace|=|\lbrace v_{c,j}:v_{c,j}\in{V(K_3 \odot W_{n+1} )/v_{k,m}}\rbrace |=(n-1)(n+2)$.
\end{itemize}
Hence, if $d(v_{i,j},w^{'})=k$, there exist $s'\in {V(K_n \bigodot W_{n+1} )}$ such that $d(v_{k,m},s')=k$ and $|\lbrace w^{'}:d(v_{i,j},w^{'})=k \rbrace|=|\lbrace s:d(v_{k,m},s^{'})=k \rbrace |$.
\item We will show that the vertices $u_i$ and $u_j$ for $i\neq j$,$i,j\in{\lbrace 1,2,…,n \rbrace}$ are same level vertices.
\begin{itemize}
\item For every $x_1^{''}\in{\lbrace u_l :u_l\in{V(K_n \odot W_{n+1} )/u_i}\rbrace}$ $\cup {\lbrace v_{i,m}:0\leq m\leq n+1 \rbrace}$,$d(u_i,x_1^{''} )=1$, there exist $y_1^{''}\in{\lbrace u_m :u_m\in}$ ${V(K_n \odot W_{n+1} )/u_j}\rbrace $ $\cup{\lbrace v_{j,m}:0\leq m\leq n+1 \rbrace}$, such that $d(u_i,x_1^{''} )=d(u_j,y_1^{''} )=1$ and $|\lbrace u_l :u_l\in{V(K_n \odot W_{n+1} )}/$ $u_i \rbrace$ $\cup{\lbrace v_{i,m} :0\leq m\leq n+1}\rbrace|$ $=|\lbrace u_m :u_m\in{V(K_n \odot W_{n+1} )/u_j}\rbrace$ $\cup{\lbrace v_{j,m}:0\leq m\leq n+1 \rbrace}|=(n-1)+(n+1)=2n$,
\item for every $x_2^{''}\in{\lbrace v_{l,m} :v_{l,m}\in{V(K_n \odot W_{n+1} )/v_{i,m}}\rbrace }$ for $m\in{\lbrace 0,1,2,…,n+1 \rbrace },d(u_i,$ $x_2^{''} )$ $=2$, there exist $y_2^{''}\in{\lbrace v_{c,m} :v_{c,m}\in{V(K_n \odot W_{n+1} )/v_{j,m}} \rbrace }$ such that $d(u_i,x_2^{''} )=d(u_j,y_2^{''} )$ $=2$ and $|\lbrace v_{l,m} :v_{l,m}\in{V(K_n \odot W_{n+1} )}/$ $v_{i,m} \rbrace|=|\lbrace v_{c,m}:v_{c,m}\in {V(K_n \odot W_{n+1} )}/v_{j,m} \rbrace|$ $=(n-1)(n+2)$. 
\end{itemize}
Hence, if $d(u_i,w^{''})=k$, there exist $s^{''}\in {V(K_n \odot W_{n+1} )}$ such that $d(u_j,s^{''})=k$ and $|\lbrace w^{''}:d(u_i,w^{''})=k \rbrace|=|\lbrace s^{''}:d(u_j,s^{''})=k \rbrace |$.
\end{enumerate}
\end{proof}

\begin{lemma}
	For every $m \geq n \geq 3, pd(K_n \odot W_m) \geq n$
\end{lemma}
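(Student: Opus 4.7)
The plan is a proof by contradiction. Assume $\Pi=\{S_1,\dots,S_{n-1}\}$ is a resolving partition of $K_n\odot W_m$ with only $n-1$ classes. First I would extend Proposition 3.6 part 3 from $W_{n+1}$ to $W_m$ for any $m\ge n\ge 3$: the argument there depends only on the fact that each hub $u_i$ has exactly $(n-1)+(m+1)$ neighbours at distance $1$ and $(n-1)(m+1)$ non-neighbours at distance $2$, independent of $i$, so the $n$ hubs $u_1,\dots,u_n$ are pairwise same-level vertices in $K_n\odot W_m$. Pigeonhole then places two of them, say $u_i$ and $u_j$, in a common class $S_a$.

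Next I would analyse which class $S_b\ne S_a$ can distinguish $u_i$ from $u_j$. Since every vertex of $K_n\odot W_m$ is within distance $2$ of any hub, $d(u_i,S_b),d(u_j,S_b)\in\{1,2\}$. Observing that $d(u_i,S_b)=1$ iff $S_b$ contains either a hub or a vertex of $V(W_m^i)$ (and analogously for $u_j$), the only way to make $d(u_i,S_b)\ne d(u_j,S_b)$ is for $S_b$ to be hub-free and to meet exactly one of $V(W_m^i),V(W_m^j)$. Consequently, every same-class pair of hubs demands a dedicated hub-free, wheel-biased class in $\Pi$.

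Let $h$ denote the number of hub-containing classes of $\Pi$ and $e=(n-1)-h$ the number of hub-free classes; by pigeonhole some hub-class contains $\lceil n/h\rceil\ge 2$ hubs. By the previous step the sets $W(C)=\{\ell:C\cap V(W_m^\ell)\ne\emptyset\}$, as $C$ ranges over hub-free classes, must form a separating family on the hubs that share any class. Combining this with the analogous separation demand obtained by applying the same argument to the $n$ pairwise same-level centres $v_{1,0},\dots,v_{n,0}$ (the $W_m$ version of Proposition 3.6 part 1) and with the wheel-internal constraint obtained by applying Corollary 3.4 inside each $W_m^i$ (using the $W_m$ analog of Proposition 3.3, namely that all rim vertices of $W_m^i$ are weak equivalent via the centre $v_{i,0}$), I would force $h+e\ge n$, contradicting $h+e=n-1$ and completing the proof.

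The main obstacle is this last step. The hub-separation inequality $2^e\ge\lceil n/h\rceil$ alone is tight only for $n=3$, so for general $n$ the argument must simultaneously exploit the demands of distinguishing hubs, centres, and the $m\ge n$ rim vertices inside each of the $n$ wheels; the hypothesis $m\ge n$ is used precisely to guarantee that each wheel's rim carries enough weak-equivalent pairs to force the additional classes needed to close the count.
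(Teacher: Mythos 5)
Your opening moves are sound and match the paper's: the paper's proof likewise extends the same-level property of the hubs $u_1,\dots,u_n$ to $K_n\odot W_m$, applies pigeonhole to put two hubs $u_i,u_j$ into one of the $n-1$ classes, and then argues no other class can tell them apart. Your characterization of the separating classes — $d(u_i,S_b)\neq d(u_j,S_b)$ forces $S_b$ to be hub-free and to meet exactly one of $V(W_m^i),V(W_m^j)$ — is correct and in fact cleaner than the paper's corresponding step (the paper splits into the case where every class contains a neighbour of both hubs, giving $r(u_i|\Pi)=r(u_j|\Pi)$ at once, and the complementary case, which it closes by invoking its same-level lemma to claim the minimum distances from $u_i$ and $u_j$ to such a class must coincide).

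The genuine gap is the one you flag yourself: the contradiction is never derived. Having reduced everything to showing that an $(n-1)$-partition cannot contain a hub-free class meeting exactly one of the two wheels, you propose to force $h+e\geq n$ by combining three separate demands (separating the hubs, separating the centres $v_{i,0}$, and resolving the weak-equivalent rim vertices inside each wheel), but none of these demands is shown to be additive. A single hub-free class can separate many hub pairs simultaneously, so the separating-family condition yields only $e\geq\log_2\lceil n/h\rceil$, which is compatible with $h+e$ far below $n$; the centre pairs $v_{i,0},v_{j,0}$ can be separated by the very same hub-free classes; and the within-wheel rim constraints cost roughly $pd(W_m)$ classes, which do not scale with $n$ and, crucially, can be shared across all $n$ wheels. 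So the inequality $h+e\geq n$ is asserted rather than proved, and it is not evident that it follows from the constraints you list. To complete the argument you need either an explicit proof that every class of an $(n-1)$-partition must contain a neighbour of both $u_i$ and $u_j$ (which is what the paper's second case attempts), or a genuinely closed counting argument; as written, the proposal stops exactly where the real difficulty begins.
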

\begin{proof}
Let $\Pi={\lbrace S_1,S_2,\ldots,S_{n-1} \rbrace}$ be a ordered partition of $V\left(K_n\odot W_{m}\right)$ and for every $x,y\in V\left(K_n\odot W_{m}\right)$  partitioned arbitrarily to $S_1,S_2,\ldots,S_{n-1}$. Based on Proposition 3.2 the vertices $u_i$ and $u_j$ for $i\neq j$,$i,j\in{\lbrace 1,2,\ldots,n \rbrace}$ are same level vertices. Since there are only $n-1$ partition classes on $\Pi$ then there are at least two level vertices, namely $u_i$ and $u_j$ which are in the same partition class. Not that for every $w_i\in{\lbrace u_l:u_l\in V\left(K_n\odot W_{m}\right)/u_i \rbrace}$ $\cup\left\{v_{i,j}:0\le j\le m\right\},d\left(u_i,w_i\right)=1$, there exist $w_j\in{u_t\:u_t\in V\left(K_n\odot W_{m}\right)/u_j}\cup\left\{v_{j,m}:0\le m\le n+1\right\}$, such that $d\left(u_i,w_i\right)=d\left(u_j,w_j\right)=1$. Consider the following cases.

\textbf{Case 1} If for any $j\in\left\{1,2,\ldots,n-1\right\},\ \ S_j$ contains $w_i$ and $w_j$, then $d\left(u_i,S_j\right)=d\left(u_j,S_j\right)=1$. So, $r\left(u_i\middle|\Pi\right)=r\left(u_j\middle|\Pi \right)$. Therefore, $\Pi$ is esolving partition of graph $K_n\odot W_{m}$.

\textbf{Case 2} If there exist $j\in\left\{1,2,\ldots,n-1\right\},\ \ S_j$ does not contain $w_i$ and $w_j$, then for any $k\in\left\{1,2,\ldots,m\right\}$ we can choose the vertex $w_i^\prime\in V\left(K_n\odot W_{m}\right),\ w_i^\prime\in S_k$, such that $d\left(u_i,w_i^\prime\right)=l$, where $l=\min{\left\{d\left(u_i,a\right)\:a\in S_k\right\}}$. Since the vertices $u_i$ and $u_j$ are same level vertices on $K_n\odot W_{m}$, then there exist $w_j^\prime\in V\left(K_n\odot W_{m}\right),\ w_j^\prime\in S_k$, such that $d\left(u_j,w_j^\prime\right)=l$. Based on Lemma 3.1, we have $\Pi$ is not a resolving partition of graph $K_n\odot W_{m}$.

So, because there are only $n-1$ partition classes, then each partition class on $\Pi$ contains a pair of same level vertices or contains the vertices that have the same distance from pairs of level vertices. Therefore, $\Pi$ is not a resolving partition of graph $K_n\odot W_{n+1}$, we have $pd\left(K_n\odot W_{m}\right)\neq n-1$. Consequently, the lower bound of the partition dimension of the graph $K_n\odot W_{n+1}$ is $pd\left(K_n\odot W_{m}\right)\geq n$.
\end{proof}

\begin{lemma}
	For Every $n \geq 3, pd(K_n \odot W_n) \leq n$
\end{lemma}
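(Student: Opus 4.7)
The plan is to prove the upper bound by exhibiting an explicit ordered partition $\Pi=\{S_1,S_2,\ldots,S_n\}$ of $V(K_n\odot W_n)$ with exactly $n$ classes and verifying that it is a resolving partition. The backbone of the construction is the hub assignment $u_i\in S_i$ for $i=1,\ldots,n$. Because the hubs form a clique in $K_n$, this immediately yields $r(u_i|\Pi)=(1,1,\ldots,1,0,1,\ldots,1)$ with the unique $0$ in the $i$-th coordinate, so the $n$ hubs are pairwise separated. This assignment also provides a free cross-copy separation: for every wheel vertex $v_{i,j}$ of the $i$-th copy, the $i$-th coordinate of $r(v_{i,j}|\Pi)$ equals $1$ through $u_i$, whereas the $k$-th coordinate for $k\neq i$ is at least $1$ and attains $1$ only when $S_k$ already contains some vertex of the same wheel copy. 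Vertices belonging to different wheel copies therefore tend to disagree on their hub-coordinates.

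The wheel vertices $v_{i,0}, v_{i,1}, \ldots, v_{i,n}$ are then distributed among the classes. A natural candidate is to place each centre $v_{i,0}$ in $S_i$ alongside $u_i$, and to scatter the $n$ cycle vertices $v_{i,1},\ldots,v_{i,n}$ across the $n$ classes through a cyclic shift that depends on $i$, chosen so that different wheel copies induce visibly different placement patterns. With $\Pi$ fully specified, the verification splits into three checks: (a) the centre $v_{i,0}$ is distinguished from $u_i$ and from the cycle vertices of the same copy by its distances to cycle vertices placed in other classes; (b) the cycle vertices within a single wheel copy have pairwise distinct representations, exploiting that cyclic neighbours $v_{i,j-1}$ and $v_{i,j+1}$ are placed in different classes, in the spirit of Corollary 3.4; and (c) vertices belonging to different wheel copies are separated by the hub asymmetry described above combined with the $i$-dependence of the shift.

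The main obstacle I anticipate is step (b). Each augmented wheel $W^{i\prime}_n$ contributes $n+2$ vertices into only $n$ classes, so some class necessarily holds at least two vertices from the same wheel copy. By the analogue of Proposition 3.3 for $W_n$, any two cycle vertices $v_{i,j}$ and $v_{i,k}$ of the same copy are weak-equivalent through the centre $v_{i,0}$, so they can be distinguished only through their cycle neighbours. Arranging the shift so that every co-class pair of same-copy cycle vertices is separated by a corresponding difference in the placement of their cycle neighbours is the combinatorial heart of the argument. I expect the case $n=3$ to require separate attention, since the augmented wheel $W^{i\prime}_3$ collapses into $K_5$ and all within-copy distances equal $1$; exhibiting a dedicated partition and checking it directly may be cleaner than applying the general cyclic scheme.
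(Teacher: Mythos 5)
Your proposal is a plan rather than a proof: the partition is never pinned down (the ``cyclic shift that depends on $i$'' is left unspecified), and you yourself defer the verification of step (b) as ``the combinatorial heart of the argument.'' More seriously, the most natural reading of your construction fails outright. If $u_i,v_{i,0}\in S_i$ and the $n$ cycle vertices $v_{i,1},\dots,v_{i,n}$ are distributed one per class by a cyclic shift, then every class $S_k$ with $k\neq i$ contains exactly one cycle vertex of copy $i$. The centre $v_{i,0}$ is adjacent to all of these, so $d(v_{i,0},S_k)=1$ for every $k\neq i$; the hub $u_i$ is adjacent to every vertex of its copy and to every other hub, so likewise $d(u_i,S_k)=1$ for every $k\neq i$. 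Hence $r(u_i|\Pi)=r(v_{i,0}|\Pi)=(1,\dots,1,0,1,\dots,1)$ and $\Pi$ is not resolving. To separate $u_i$ from $v_{i,0}$ you need some class that contains \emph{no} vertex of copy $i$ (forcing $d(v_{i,0},\cdot)\ge 2$) yet still lies at distance $1$ from $u_i$ (e.g.\ via another hub) --- which a one-per-class scatter makes impossible. Your check (a), which claims the centre is distinguished ``by its distances to cycle vertices placed in other classes,'' overlooks exactly this: the centre sees \emph{every} cycle vertex of its copy at distance $1$, so those placements cannot help.

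This is precisely where the paper's construction goes the opposite way from yours. Instead of putting $u_i$ into $S_i$, the paper arranges that \emph{no} vertex of the block $V(W^{i\prime}_n)$ lies in $S_i$; the $i$-th coordinate then acts as a signature separating blocks, and the vertices $u_i,v_{i,0},v_{i,1},v_{i,2}$ of one block are packed together into a single \emph{other} class and told apart by their unequal distances to the remaining classes (e.g.\ $d(u_i,S)\le 2$ always, while $d(v_{i,0},S)$ can be $3$). If you want to salvage your hub-in-$S_i$ scheme you must break the one-per-class symmetry --- leave at least one class free of copy-$i$ cycle vertices and double up elsewhere --- and then actually exhibit the shift and carry out the within-copy check (b), including the special case $n=3$ you flag. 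As it stands the argument has both an unfilled combinatorial core and a concrete collision.
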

\begin{proof}
	Based on several propositions and lemmas regarding same-level points, equivalent points, and their relationships with the partition dimension of the graph $(K_n \odot W_n)$  we will construct partition  $\prod =\{S_1, S_2, \cdots S_n\}$. For each vertex in $ V(W^{i'}_m)$ it will be partitioned into all partitions in $\prod$ except the partition $S_i$, we defined the partitions below \\
	$ S_1=\{u_i, v_{i,0}, v_{i,1}, v_{i,2}\}$, where $2 \leq i \leq n$ \\
	$ S_2=\{u_1, v_{1,0}, v_{1,1}, v_{1,2}, v_{1,3}\}$, where $3 \leq i \leq n$ \\
		$ S_x=\{v_{i,x}, v_{i,x+1}\}$, where $3 \leq x \leq n-1$, $1 \leq i \leq x-1$, $x+1 \leq j \leq n$  \\
		$S_{n-1}=\{v_{i,n-2}, v_{n,n}| 1 \leq i \leq n-2\}$\\
			$S_{n1}=\{v_{i,n}| 1 \leq i \leq n-1\}$\\

\noindent We will $K_n \odot W_n$ prove that $\prod$ is a resolving partition of graph . To prove this, we will show for consider two distinct vertices $x, y \in V(K_n \odot W_n)$ have a different representation respect to $\prod$. It will be shown that for any  $i,j \in \{1,2,\cdots n\}, i \neq j$ the vertices in $V(W^{i'}_m)$ and $V(W^{j'}_m)$ have different representations with respect to . Notice that because there is no vertex in  $V(W^{j'}_m)$ partition into $S_i$ for any  $x \in V(W^{j'}_n)$, $r(x, \prod )= k$ where $k=\{0,1\}$. Whereas for the vertices in $V(W^{j'}_n)$, there is at least one vertex partitioned into $S_i$. Therefore, for any $y \in V(W^{j'}_n)$, $r(y, \prod )= k$ where $k=\{0,1\}$. Thus $r(x|\prod ) \neq r(x|\prod )$. Hence, it is proven that for any  $i,j \in \{1,2,\cdots n\} i \neq j$ the vertices in $V(W^{i'}_n)$ and $V(W^{j'}_n)$ have different representation. Then, it remains to show that vertices partitioned into the same partition class have different representations. It is unnecessary to show vertices partitioned into different partition classes because if any vertex $x_i \in S$ is taken, only the $i$-th entry in $r(x|\prod)$ will be zero, while the others are non-zero. \\
\noindent For $V(W^{1'}_n)$, the representative of vertices that have the same partition class respect to  $\prod $ are $r(v_{1,0}|\prod)=\{2,0,  \underbrace{1}_\textit{3rd}, \cdots,\underbrace{1}_\textit{n-th}\}$, $r(v_{1,1}|\prod)=\{2,0,  \underbrace{2}_\textit{3rd}, \cdots,\underbrace{1}_\textit{n-th}\}$, 
$r(v_{1,2}|\prod)=\{2,0,  \underbrace{1}_\textit{3rd}, \cdots,\underbrace{2}_\textit{n-th}\}$, and $r(u_1|\prod)=\{1,0,  \underbrace{1}_\textit{3rd}, \cdots,\underbrace{1}_\textit{n-th}\}$. Thus, it is proven that all vertices in $V(W^{1'}_n)$ have different representations in  $\prod$.\\

 For $V(W^{2'}_n)$, the representative of vertices that have the same partition class respect to  $\prod $  are $r(v_{2,0}|\prod)=\{0,  \underbrace{1}_\textit{2nd},1 \cdots,\underbrace{1}_\textit{n-th}\}$, $r(v_{2,1}|\prod)=\{0,  \underbrace{2}_\textit{2nd},2 \cdots,\underbrace{1}_\textit{n-th}\}$, 
$r(v_{2,2}|\prod)=\{0,  \underbrace{2}_\textit{2nd}, \cdots,\underbrace{2}_\textit{n-th}\}$, and $r(u_2|\prod)=\{0,  \underbrace{1}_\textit{2nd}, \cdots,\underbrace{1}_\textit{n-th}\}$. Thus, it is proven that all vertices in $V(W^{2'}_n)$ have different representations in  $\prod$.\\

 For $V(W^{i'}_n)$, $3 \leq i \leq n$ the representative of vertices that have the same partition class respect to  $\prod $  are $r(v_{i,0}|\prod)=\{0, 1 \cdots, \underbrace{3}_\textit{i-th},1 \cdots,\underbrace{1}_\textit{n-th}\}$, $r(v_{i,1}|\prod)=\{0, 2,\cdots,  \underbrace{3}_\textit{i-th},1 \cdots,\underbrace{1}_\textit{n-th}\}$,
 
$r(v_{i,2}|\prod)=\{0, 1 \cdots  \underbrace{3}_\textit{i-th},1 \cdots,\underbrace{2}_\textit{n-th}\}$, and $r(u_{i}|\prod)=\{0, 1 \cdots  \underbrace{2}_\textit{i-th},1 \cdots,\underbrace{1}_\textit{n-th}\}$,. Thus, it is proven that all vertices in $V(W^{i'}_n)$, $3 \leq i \leq n$ have different representations in  $\prod$.\\

\noindent We can see that for every two vertices have a different representation respect to  $\prod$, then $\prod$ is resolving partition to $K_n \odot W_n$.  Consequently $pd(K_n \odot W_n) \leq n$.
\end{proof}
\begin{lemma}
	For Every $n \geq 4, pd(K_n \odot W_{n+1}) \leq n$
\end{lemma}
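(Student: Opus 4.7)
The plan is to extend the explicit construction used in Lemma 3.3 to the case of $W_{n+1}$, exhibiting an ordered $n$-partition $\Pi = \{S_1, S_2, \ldots, S_n\}$ of $V(K_n \odot W_{n+1})$ that resolves every pair of vertices. As in the previous lemma, the overarching design principle will be that for each $i$, no vertex of the block $V(W^{i'}_{n+1}) = \{u_i\} \cup \{v_{i,0}, v_{i,1}, \ldots, v_{i,n+1}\}$ is placed in $S_i$. This guarantees that block $i$ is the unique block whose vertices realize $d(\,\cdot\,,S_i)\geq 1$ only by routing through $u_i$ (giving distance exactly $2$ to the nearest vertex of $S_i$ that lies outside block $i$), whereas vertices of block $j\neq i$ can reach $S_i$ in one step via their own $u_j$. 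Inspecting the $i$-th and $j$-th coordinates of the representation will therefore separate any two vertices lying in distinct blocks.

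Within a single block the construction must be more delicate. Each block contains $n+3$ vertices that must be distributed over only $n-1$ available classes, and Proposition 3.1 together with Corollary 3.1 tells me that any two rim vertices $v_{i,j},v_{i,k}$ are weakly equivalent; so whenever they are placed in the same class of $\Pi$, their cyclic neighbours $v_{i,j\pm 1}, v_{i,k\pm 1}$ must be scattered across different classes. The natural adaptation of the Lemma 3.3 construction is to assign $u_i, v_{i,0}$ and two initial rim vertices to $S_1$ or $S_2$ (with the block index shifted to avoid $S_i$), then assign consecutive rim pairs $\{v_{i,x}, v_{i,x+1}\}$ to $S_x$ for $3\leq x\leq n-1$, and place the extra rim vertex $v_{i,n+1}$ together with $v_{i,n}$ into $S_n$. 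Because $n\geq 4$ there are enough classes for this pairing strategy to produce only consecutive collisions, so every weakly equivalent pair that shares a class is separated by distinct cyclic neighbours, as Corollary 3.1 demands.

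Having defined $\Pi$, the verification splits cleanly. First I would check inter-block separation: for $x\in V(W^{i'}_{n+1})$ and $y\in V(W^{j'}_{n+1})$ with $i\neq j$, the $i$-th coordinate of $r(x|\Pi)$ equals $0$ or $1$ while the $i$-th coordinate of $r(y|\Pi)$ is $1$ with a different supporting vertex, and the $j$-th coordinate breaks any remaining tie symmetrically. Then I would verify intra-block separation by writing out the representations $r(u_i|\Pi)$, $r(v_{i,0}|\Pi)$ and $r(v_{i,j}|\Pi)$ for $1\leq j\leq n+1$, confirming that all $n+3$ tuples are distinct; these are short calculations because all entries belong to $\{0,1,2,3\}$ and only a few of them vary as $j$ ranges over the rim.

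The step I expect to be most delicate is handling the extra rim vertex $v_{i,n+1}$. In Lemma 3.3 the cycle length was exactly $n$ and the pairing $\{v_{i,x},v_{i,x+1}\}$ used up every rim vertex without clash; now the presence of $v_{i,n+1}$ forces one pair to be enlarged to a triple unless I distort the scheme. The construction above lumps $v_{i,n+1}$ into $S_n$ together with $v_{i,n}$, but then I must certify that $v_{i,n+1}$ is still distinguished from $v_{i,n}$, whose cyclic neighbours are $v_{i,n-1}$ and $v_{i,n+1}$ itself; the representations will differ in the $(n-1)$-st coordinate, since $v_{i,n-1}\in S_{n-1}$ gives $d(v_{i,n},S_{n-1})=1$ while $d(v_{i,n+1},S_{n-1})=2$ along the cycle. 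The wrap-around edge $v_{i,1}v_{i,n+1}$ also has to be accounted for when computing these cycle distances. Once this boundary case is pinned down, all remaining sub-cases mimic those of Lemma 3.3 and the upper bound $pd(K_n\odot W_{n+1})\leq n$ follows.
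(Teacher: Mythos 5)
Your construction coincides with the paper's: the same block-avoids-$S_i$ design, the same assignment of $u_i, v_{i,0}$ and two initial rim vertices to $S_1$ or $S_2$, the same consecutive-pair classes $S_x$ for $3\le x\le n-1$, and the same resolution of the extra rim vertex by placing $v_{i,n+1}$ together with $v_{i,n}$ in $S_n$ and separating them through the $(n-1)$-st coordinate ($d(v_{i,n},S_{n-1})=1$ versus $d(v_{i,n+1},S_{n-1})=2$), which is exactly how the paper's listed representations distinguish them. Your verification plan (inter-block separation via the $i$-th and $j$-th coordinates, intra-block separation by computing all representations explicitly) is also the paper's, so this is essentially the same proof.
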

\begin{proof}
Will define an ordered partition $\Pi^\prime=\left\{S_1^\prime,S_2^\prime,\ldots,S_n^\prime\right\}$ of $V\left(K_n\odot W_{n+1}\right)$. Based on Proposition 3.1, for every $i\in\left\{1,2,..,n\right\}$, the vertices $v_{i,j}$ and $v_{i,k}$ where $j\neq k,\ \ j,k\in\left\{1,2,\ldots,n+1\right\}$ are the weak equivalent vertices, so by Corollary 3.1, the vertices $v_{i,j}$ and $v_{i,k}$ or the vertices $v_{i,j+1},v_{i,k+1},v_{i,j-1},v_{i,k-1}\ $ belong to distinct class partition of $\Pi^\prime$. According Proposition 3.2, for every $m\in\left\{1,2,\ldots,n+1\right\}\ $ the vertices $v_{i,m}$ and $v_{j,m}$ for $i,j\in\left\{1,2,\ldots,n\right\}$ are same level vertices and there exist $x_1,y_1\in V\left(K_n\odot W_{n+1}\right)$, such that $d\left(v_{i,m},x_1\right)=d\left(v_{j,m},y_1\right)=k_1$, where $k_1=\min{\lbrace d(v_{i,m},a):a\in{S}, S\in{\Pi^{'}}\rbrace}$. So, by Lemma 3.2, the vertices $v_{i,m}$ and $v_{j,m}$ or the vertices $x_1$ and $y_1$ belong to distinct class partition  of  $\Pi^\prime$. Using the same proposition and lemma, we have the vertices $v_{i,0}$ and $v_{j,0}$ are same level vertices and there exist $x_2,y_2\in V\left(K_n\odot W_{n+1}\right)$, such that $d\left(v_{i,0},x_2\right)=d\left(v_{j,0},y_2\right)=k_2$, where $k_2=\min{\lbrace d(v_{i,0},a):a\in{S}, S\in{\Pi^\prime}\rbrace}$, then the vertices $v_{i,0}$ and $v_{j,0}$ or the vertices $x_2$ and $y_2$ belong to distinct class partition  of  $\Pi^\prime$. Using this information, we define :

$S_1^\prime=\left\{u_i,v_{i,0},v_{i,1},v_{i,2}\right\}$, where $2\le i\le n$ ,

$S_2^\prime=\left\{u_1,v_{1,0},\ v_{1,1},v_{1,2},v_{i,3}\right\}$, where $3\le i\le n$ ,

$S_x^\prime=\lbrace v_{i,x},v_{j,x+1}\rbrace$,where $\ \ \ 3\le x\le n-1,\ \ 1\le i\le x-1,x\ +1\le j\le n$ ,

$S_{n-1}^\prime=\lbrace v_{i,n-1} \rbrace $ $\cup{\lbrace v_{n,n},v_{n,n+1} \rbrace}$, and 

$S_n^\prime=\lbrace v_{i,n},v_{i,n+1} |1\leq i \leq n \rbrace$.

We will prove that $\Pi^\prime$ is a resolving partition of graph $K_n\odot W_{n+1}$. To prove this, consider two distinct vertices $x,y\in V\left(K_n\odot W_{n+1}\right)$ that are contained in the same partition class on $\Pi^\prime$.
 
For the vertex $u_2\in S_1^\prime,r(u_2 |\Pi^\prime)=\lbrace 0,1,{\underbrace{1,\ldots,1}_{n-2}} \rbrace$ and for the vertex $v_{2,0}\in S_1^\prime,\ r\left(v_{2,0}\right|\Pi^\prime)= \lbrace 0,2,{\underbrace{1,\ldots,1}_{n-2}} \rbrace$. For any $u_i,v_{i,0}\in S_1^\prime/\left\{u_2,v_{2,0}\right\}$, representation the vertices $u_i$ and $v_{i,0}$ with respect to $\Pi^\prime$ are each distinguished by the partition class $S_i^\prime$ for $i\in\left\{3,4,\ldots,n\right\}$. In this case :
$r\left(u_3\right|\Pi^\prime)=\lbrace 0,1,2,{\underbrace{1,\ldots,1}_{n-3}} \rbrace$,
$r\left(u_4\right|\Pi^\prime)=\lbrace 0,1,1,2,{\underbrace{1,\ldots,1}_{n-4}} \rbrace$,
$r\left(u_i\right|\Pi^\prime)=\lbrace 0,{\underbrace{1,\ldots,1}_{i-2}},2,$ ${\underbrace{1,\ldots,1}_{n-i}}  \rbrace$,  for  $5\le i\le n$;
$r\left(v_{3,0}\right|\Pi^\prime)=\lbrace 0,1,3,{\underbrace{1,\ldots,1}_{n-3}} \rbrace$,
$r\left(v_{4,0}\right|\Pi^\prime)=\lbrace 0,1,1,3,{\underbrace{1,\ldots,1}_{n-4}}\rbrace$, and
$r\left(v_{i,0}\right|\Pi^\prime)=\lbrace 0,{\underbrace{1,\ldots,1}_{i-2}},3,{\underbrace{1,\ldots,1}_{n-i}} \rbrace$,  for  $5\le i\le n$.
So, for any $x_1,y_2\in S_1^\prime,r\left(x_1\right|\Pi^\prime)\neq r\left(y_2\right|\Pi^\prime)$. 

For the vertices $u_1,v_{1,0}\in S_2^\prime$, representation of the vertices with respect to $\Pi^\prime$ is distinguished by the partition class $S_1^\prime$.  In this case $d\left(u_1,S_1^\prime\right)=1$, while $d\left(v_{1,0},S_1^\prime\right)=2$. So, $d\left(u_1,S_1^\prime\right)\neq d\left(v_{1,0},S_1^\prime\right)$. For the vertices $v_{1,1},v_{1,2}\in S_2^\prime$, representation of the vertices with respect to $\Pi^\prime$ is distinguished by the partition class $S_n^\prime$. We get $d\left(v_{1,1},S_n^\prime\right)=1$ and $d\left(v_{1,2},S_n^\prime\right)=2$. For $v_{i,3}\in S_2^\prime $ where $3\le i\le n$, representation of the vertices with respect to $\Pi^\prime$ are :
$r\left(v_{3,3}\right|\Pi^\prime)=\lbrace 1,0,3,1,{\underbrace{2,\ldots,2}_{n-4}} \rbrace$, and
$r\left(v_{i,3}\right|\Pi^\prime)=\lbrace 1,0,1,{\underbrace{2,\ldots,2}_{i-4}},3,{\underbrace{2,\ldots,2}_{n-i}}\rbrace$,  for $4\le i\le n$.
So, for any $x_2,y_2\in S_2^\prime,r\left(x_2\right|\Pi^\prime)\neq r\left(y_2\right|\Pi^\prime)$. 

For $v_{i,x}\in S_x^\prime $, where $3\le x\le n-2,\ 1\le i\le x-1$ representation of the vertices $v_{i,x}\in S_x^\prime $ with respect to $\Pi^\prime$ are :
$r\left(v_{1,x}\right|\Pi^\prime)= \lbrace 2,{\underbrace{1,\ldots,1}_{x-2}},0,1,{\underbrace{2,\ldots,2}_{n-x-1}}\rbrace $, for $x=3,4$;
$r\left(v_{1,x}\right|\Pi^\prime)=\lbrace 2,{1,\underbrace{2,\ldots,2}_{x-4}},1,0,1,{\underbrace{2,\ldots,2}_{n-x-1}} \rbrace$, for $5\le x\le n-2$;
$r\left(v_{2,x}\right|\Pi^\prime)=\lbrace 1,2,{\underbrace{1,\ldots,1}_{x-3}},0,1,{\underbrace{2,\ldots,2}_{n-x-1}} \rbrace$, for $x=3,4$;
$r\left(v_{2,x}\right|\Pi^\prime)=\lbrace {1,\underbrace{2,\ldots,2}_{x-3}},1,0,1,{\underbrace{2,\ldots,2}_{n-x-1}}\rbrace $, for $5\le x\le n-2$;
$r\left(v_{i,x}\right|\Pi^\prime)=\lbrace 1,{\underbrace{2,\ldots,2,}_{i-2}},3,$ ${\underbrace{2,\ldots,2,}_{x-i-2}},1,0,1,{\underbrace{2,\ldots,2,}_{n-x-1}}\rbrace $, 
for $5\le x\le n-2, 3\le i\le x-2$, and
$r\left(v_{x-1,x}\right|\Pi^\prime)= \lbrace 1,{\underbrace{2,\ldots,2,}_{x-4}}1,$ $3,0,1,{\underbrace{2,\ldots,2}_{n-x-1}} \rbrace$ for $4\le x\le n-2$.

As for the vertices $v_{j,x+1}\in S_x^\prime $, where $3\le x\le n-2,\ x+1\le j\le n$ representation of the vertices $v_{j,x+1}\in S_x^\prime $ with respect to $\Pi^\prime$ are :
$r\left(v_{j,x+1}\right|\Pi^\prime)=\lbrace 1,{\underbrace{2,\ldots,2}_{x-3}},1,0,3,1,{\underbrace{2,\ldots,2}_{n-x-2}} \rbrace$, for $j=x+1$;
$r\left(v_{j,x+1}\right|\Pi^\prime)=\lbrace 1,{\underbrace{2,\ldots,2}_{x-3}},1,0,1,3,{\underbrace{2,\ldots,2}_{n-x-2}} \rbrace$, for $j=x+2$, and
$r\left(v_{j,x+1}\right|\Pi^\prime)=\lbrace 1,{\underbrace{2,\ldots,2}_{x-3}},1,0,1,{\underbrace{2,\ldots,2}_{j-x-2}},3,{\underbrace{2,\ldots,2}_{n-j}} \rbrace$, for $x+3\le j\le n$.	
So for any $x_3,y_3\in S_x^\prime,r\left(x_3\right|\Pi^\prime)\neq r\left(y_3\right|\Pi^\prime)$.

For the vertices $v_{i,n-1},v_{n,n},v_{n,n+1}\in S_{n-1}^\prime$, where $n\geq4,\ 1\le i\le n-2$ representation of the vertices with respect to $\Pi^\prime$ are :
$r\left(v_{1,n-1}\right|\Pi^\prime)=\lbrace 2,{\underbrace{1,\ldots,1}_{n-3}},0,1 \rbrace$, for $n=5,6$;
$r\left(v_{1,n-1}\right|\Pi^\prime)=\lbrace 2,{1,\underbrace{2,\ldots,2}_{n-5}},1,0,1 \rbrace $, for $n\geq7$;
$r\left(v_{2,n-1}\right|\Pi^\prime)=\lbrace 1,2,{\underbrace{1,\ldots,1}_{n-3}},0,1 \rbrace$, for $n=5,6$;
$r\left(v_{2,n-1}\right|\Pi^\prime)=\lbrace 1,{\underbrace{2,\ldots,2}_{n-4}},1,0,1 \rbrace$, for $n\geq7$;
$r\left(v_{i,n-1}\right|\Pi^\prime)=\lbrace 1,{\underbrace{2,\ldots,2}_{i-2}},3,{\underbrace{2,\ldots,2}_{n-i-3}},1,$ $0,1 \rbrace$, for $n\geq4,\ 3\le i\le n-2$;
$r\left(v_{n-2,n-1}\right|\Pi^\prime)= \lbrace 1,{\underbrace{2,\ldots,2,}_{n-5}}1,3,0,1\rbrace$, 
$r\left(v_{n,n}\right|\Pi^\prime)=\lbrace 1,{\underbrace{2,\ldots,2,}_{n-4}}1$ $,0,3\rbrace $, and 
$r\left(v_{n,n+1}\right|\Pi^\prime)=\lbrace 1,{\underbrace{2,\ldots,2,}_{n-3}}0,3 \rbrace$.
So for any $x_4,y_4\in S_{n-1}^\prime$, $r\left(x_4|\prod\prime\right)$ $\neq r\left(y_4\right|\Pi^\prime)$. 

For $v_{i,n},v_{i,n+1}\in S_n^\prime $, where $n\geq4,\ 1\le i\le n-1$ representation of the vertices with respect to $\Pi^\prime$ are :
$r\left(v_{1,n}\right|\Pi^\prime)=\lbrace 2,1,{\underbrace{2,\ldots,2}_{n-4}},1,0 \rbrace $,
$r\left(v_{2,n}\right|\Pi^\prime)=\lbrace 1,{\underbrace{2,\ldots,2}_{n-3}},1,0\rbrace$, 
$r\left(v_{i,n}\right|\Pi^\prime)=\lbrace 1,{\underbrace{2,\ldots,2}_{i-2}},3,{\underbrace{2,\ldots,2}_{n-i-2}},1,0 \rbrace $, for $n\geq4,\ \ 3\le i\le n-2$;
$r\left(v_{i-1,n}\right|\Pi^\prime)=\lbrace 1,{\underbrace{2,\ldots,2}_{n-4}},1,3,0\rbrace $, 
$r\left(v_{1,n+1}\right|\Pi^\prime)=\lbrace 2,1,{\underbrace{2,\ldots,2}_{n-3}},0\rbrace $, 
$r\left(v_{2,n+1}\right|\Pi^\prime)=\lbrace 1,{\underbrace{2,\ldots,2}_{n-2}},0\rbrace $,and
$r\left(v_{i,n+1}\right|\Pi^\prime)=\lbrace 1,$ ${\underbrace{2,\ldots,2}_{i-2},}$ $3,{\underbrace{2,\ldots,2}_{n-i-1}},0\rbrace$, for $3\le i\le n-1$.
So for any $x_5,y_5\in S_n^\prime $,$r\left(x_5\right|\Pi^\prime)\neq r\left(y_5\right|\Pi^\prime)$.

We can see that for every two vertices $x,y\in V\left(K_n\odot W_{n+1}\right)$, we have $r(x|\Pi^\prime)\neq r(y|\Pi^\prime)$. Therefore $\Pi^\prime$ is a resolving partition of graph $K_n\odot W_{n+1}$. Consequently, the upper bound of the partition dimension of the graph $K_n\odot W_{n+1}$ is $pd\left(K_n\odot W_{n+1}\right)\le n$. Based on the lower bound and upper bound of of the graph $K_n\odot W_{n+1}$ for $n\geq 4$, we get $pd\left(K_n\odot W_{n+1}\right)=n$. 
\end{proof}

\begin{lemma}
	For Every $n \geq 4, pd(K_n \odot W_{n+2}) \leq n$
\end{lemma}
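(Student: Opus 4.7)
The plan is to mirror the strategy of Lemma 3.5: exhibit an explicit ordered partition $\Pi'' = \{S_1'', S_2'', \ldots, S_n''\}$ of $V(K_n \odot W_{n+2})$ with exactly $n$ classes and then verify by direct computation of the representations that $\Pi''$ is resolving. The same-level and weak-equivalence analyses of Propositions 3.1 and 3.2 carry over almost verbatim from $W_{n+1}$ to $W_{n+2}$; in particular, the central vertices $u_1, \ldots, u_n$ remain pairwise same-level, so by Lemma 3.1 they must occupy $n$ distinct classes, which both forces the skeleton of the construction and confirms that $n$ classes is the correct target.

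First, I would keep the first $n-2$ classes essentially identical to those in Lemma 3.5: $S_1'' = \{u_i, v_{i,0}, v_{i,1}, v_{i,2} : 2 \leq i \leq n\}$, $S_2'' = \{u_1, v_{1,0}, v_{1,1}, v_{1,2}\} \cup \{v_{i,3} : 3 \leq i \leq n\}$, and $S_x'' = \{v_{i,x}, v_{j,x+1} : 1 \leq i \leq x-1,\; x+1 \leq j \leq n\}$ for $3 \leq x \leq n-2$. The remaining unplaced cycle vertices $v_{i,n-1}, v_{i,n}, v_{i,n+1}, v_{i,n+2}$ (for $1 \leq i \leq n$) must then be distributed into $S_{n-1}''$ and $S_n''$. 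The distribution must satisfy Corollary 3.1: for every weakly equivalent pair $v_{i,j}, v_{i,k}$ in a single wheel $W_{n+2}^{i'}$, either the pair itself or its cyclic-neighbour set $\{v_{i,j\pm 1}, v_{i,k\pm 1}\}$ must meet two distinct classes.

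Second, I would verify the resolving property by computing $r(x|\Pi'')$ class by class, as in Lemma 3.5. Vertices from distinct copies $W_{n+2}^{i'}$ and $W_{n+2}^{j'}$ are immediately separated: for each copy there is a unique class, namely the one containing $u_i$, giving coordinate $0$ on some of its vertices and non-zero on the rest. Hence the only pairs requiring attention are intra-class pairs, where the contrast between "shortest path via $u_i$" (contributing coordinates of value $2$) and "shortest path via the hub $v_{i,0}$" (contributing coordinates of value $3$) produces distinctive signatures analogous to those tabulated in Lemma 3.5. Combined with Lemma 3.3, this yields $pd(K_n \odot W_{n+2}) = n$ for $n \geq 4$.

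The main obstacle is the extra cycle vertex $v_{i,n+2}$: because the cycle now has length $n+2$, there are strictly more weakly equivalent pairs within each wheel, and both $S_{n-1}''$ and $S_n''$ have to be chosen so that every such pair is broken up either directly or through its cyclic neighbours. In particular, after absorbing $v_{i,n+2}$, one has to re-examine the distances from each surviving cycle vertex to the classes $S_3'', \ldots, S_n''$—most of these equal $2$ (through $v_{i,0}$), which is convenient, but the few exceptions around indices $n-1, n, n+1, n+2$ must be checked by hand. This bookkeeping, while tedious, is routine given the template established in the proof of Lemma 3.5.
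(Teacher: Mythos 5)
Your overall strategy---exhibit an explicit $n$-class ordered partition and check representations---is exactly the paper's, and your first $n-2$ classes follow the paper's template. But the proof is not actually completed. The entire content of this lemma is the existence of a concrete resolving partition with $n$ classes, and you never write down $S_{n-1}''$, $S_n''$, or the placement of the $n$ new rim vertices $v_{i,n+2}$; you only state a necessary condition ("the distribution must satisfy Corollary 3.1") on a construction you have not exhibited. The verification step is likewise described rather than performed. For a purely constructive upper bound, deferring both the construction of the critical classes and the computation of the representations is a genuine gap, not routine bookkeeping that can be left to the reader.

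The one concrete commitment you do make is also the riskiest point, and it diverges from the paper. You propose to distribute all four leftover vertices $v_{i,n-1}, v_{i,n}, v_{i,n+1}, v_{i,n+2}$ of each wheel into the last two classes. The paper instead absorbs $v_{i,n+2}$ into the first two classes (into $S_1$ for $2 \le i \le n$ and into $S_2$ for the first wheel, alongside $v_{i,1}$ and $v_{i,2}$), precisely so that $S_{n-1}$ and $S_n$ keep essentially the same shape as in the $W_{n+1}$ case. Under your placement, within each wheel at least two of these four consecutive cycle vertices must share a class; since all rim vertices of one wheel are pairwise weakly equivalent (each at distance $1$ from $u_i$ and from $v_{i,0}$, and at distance $2$ from every vertex outside the block via $u_i$), such a pair, say $v_{i,n}$ and $v_{i,n+2}$, is separated only if some class contains a rim neighbour of exactly one of them. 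Whether a consistent choice of $S_{n-1}''$ and $S_n''$ achieving this for every $i$ and every such pair exists is exactly what needs to be proved, and you give no argument for it. As written, the proposal is a plausible plan that neither constructs the partition nor establishes that it is resolving.
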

\begin{proof}
	With the same consideration as in Lemma 3.4,  we defined the partitions below \\
	$ S_1=\{u_i, v_{i,0}, v_{i,1}, v_{i,2},v_{i,n+2}\}$, where $2 \leq i \leq n$ \\
	$ S_2=\{u_1, v_{1,0}, v_{1,1}, v_{1,2}, v_{1,3},v_{i,n+2} \}$, where $3 \leq i \leq n$ \\
	$ S_x=\{v_{i,x}, v_{i,x+1}\}$, where $3 \leq x \leq n-1$, $1 \leq i \leq x-1$, $x+1 \leq j \leq n$  \\
	$S_{n-1}=\{v_{i,n-2}| 1 \leq i \leq n-2\} \cup \{v_{n,n}, v_{n, n+1}\}$\\
	$S_{n1}=\{v_{i,n}, v_{i,n+1}| 1 \leq i \leq n-1\}$\\
	The same concept applied in Lemma 3.4 will be used to show that points within the same block and partitioned into a certain partition class have different representations.
	\noindent For $V(W^{1'}_n)$, the representative of vertices that have the same partition class respect to  $\prod $ are $r(v_{1,0}|\prod)=\{2,0,  \underbrace{1}_\textit{3rd}, \cdots,\underbrace{1}_\textit{n-th}\}$, $r(v_{1,1}|\prod)=\{2,0,  \underbrace{2}_\textit{3rd}, \cdots,\underbrace{2}_\textit{n-th}\}$, 
 $r(v_{1,2}|\prod)=\{2,0,  \underbrace{1}_\textit{3rd}, \cdots,\underbrace{2}_\textit{n-th}\}$, 	$r(v_{1,n}|\prod)=\{2,\underbrace{2}_\textit{2nd}, \cdots, \underbrace{1}_\textit{(n-1)-th}, \cdots,\underbrace{0}_\textit{n-th}\}$, 	$r(v_{1,n+1}|\prod)=\{1,\underbrace{1}_\textit{2nd}, \cdots, \underbrace{2}_\textit{(n-1)-th}, \cdots,\underbrace{0}_\textit{n-th}\}$,
	 and $r(u_1|\prod)=\{1,0,  \underbrace{1}_\textit{3rd}, \cdots,\underbrace{1}_\textit{n-th}\}$. Thus, it is proven that all vertices in $V(W^{1'}_n)$ have different representations in  $\prod$.\\
	 
 For $V(W^{2'}_n)$, the representative of vertices that have the same partition class respect to  $\prod $  are $r(v_{2,0}|\prod)=\{0,  \underbrace{2}_\textit{2nd},1 \cdots,\underbrace{1}_\textit{n-th}\}$, $r(v_{2,1}|\prod)=\{0,  \underbrace{2}_\textit{2nd},2 \cdots,\underbrace{1}_\textit{n-th}\}$, 
	 $r(v_{2,2}|\prod)=\{0,  \underbrace{2}_\textit{2nd}, \cdots,\underbrace{2}_\textit{n-th}\}$, $r(v_{2,n}|\prod)=\{1,\underbrace{2}_\textit{2nd}, \cdots, \underbrace{1}_\textit{(n-1)-th}, \cdots,\underbrace{0}_\textit{n-th}\}$, 	$r(v_{2,n+1}|\prod)=\{1,\underbrace{2}_\textit{2nd}, \cdots, \underbrace{2}_\textit{(n-1)-th}, \cdots,\underbrace{0}_\textit{n-th}\}$,
	 and $r(u_2|\prod)=\{0,  \underbrace{1}_\textit{3rd}, \cdots,\underbrace{1}_\textit{n-th}\}$.
	  Thus, it is proven that all vertices in $V(W^{2'}_n)$ have different representations in  $\prod$.\\
	  
 For $V(W^{i'}_n)$, $3 \leq i \leq n$ the representative of vertices that have the same partition class respect to  $\prod $  are $r(v_{i,0}|\prod)=\{0, 1 \cdots, \underbrace{3}_\textit{i-th},1 \cdots,\underbrace{1}_\textit{n-th}\}$, $r(v_{i,1}|\prod)=\{0, 2,\cdots,  \underbrace{3}_\textit{i-th},1 \cdots,\underbrace{1}_\textit{n-th}\}$,
	  
	  $r(v_{i,2}|\prod)=\{0, 1 \cdots  \underbrace{3}_\textit{i-th},1 \cdots,\underbrace{2}_\textit{n-th}\}$,
	  $r(v_{i,n}|\prod)=\{1,1  \cdots, \underbrace{1}_\textit{(n-1)-th}, \cdots,\underbrace{0}_\textit{n-th}\}$, 	$r(v_{2,n+1}|\prod)=\{1,1, \cdots, \underbrace{2}_\textit{(n-1)-th}, \cdots,\underbrace{0}_\textit{n-th}\}$,
	   and $r(u_{i}|\prod)=\{0, 1 \cdots  \underbrace{2}_\textit{i-th},1 \cdots,\underbrace{1}_\textit{n-th}\}$,. Thus, it is proven that all vertices in $V(W^{i'}_n)$, $3 \leq i \leq n$ have different representations in  $\prod$.\\
	   \noindent We can see that for every two vertices have a different representation respect to  $\prod$, then $\prod$ is resolving partition to $K_n \odot W_{n+2}$.  Consequently $pd(K_n \odot W_{n+2}) \leq n$.
\end{proof}
\begin{theorem}
For Every $n, n \geq 3, pd(K_n \odot W_{n}) = n$
\end{theorem}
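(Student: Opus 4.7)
The statement $pd(K_n \odot W_n) = n$ for $n \geq 3$ is a direct two-sided bound, so the plan is simply to assemble the inequalities already proved in the preceding lemmas rather than to do any new graph-theoretic work.

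First, I would invoke Lemma 3.3, which establishes $pd(K_n \odot W_m) \geq n$ for all $m \geq n \geq 3$. Specializing $m = n$, this immediately yields the lower bound $pd(K_n \odot W_n) \geq n$. The content behind this bound is the same-level vertex argument of Proposition 3.2 together with Lemma 3.1, guaranteeing that any partition into fewer than $n$ classes must place some pair of strongly indistinguishable hub vertices $u_i, u_j$ in a class where all neighbours collapse the representation; nothing extra need be shown here.

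Next, I would invoke Lemma 3.4, which exhibits an explicit ordered partition $\Pi = \{S_1, S_2, \ldots, S_n\}$ of $V(K_n \odot W_n)$ and verifies that every pair of vertices has a distinct representation with respect to $\Pi$. This gives the matching upper bound $pd(K_n \odot W_n) \leq n$.

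Combining the two inequalities $n \leq pd(K_n \odot W_n) \leq n$ yields equality, completing the proof. There is no genuine obstacle; the only thing to be careful about is the range of $n$, since Lemma 3.3 requires $n \geq 3$ and Lemma 3.4 is stated for $n \geq 3$ as well, so the hypothesis $n \geq 3$ of the theorem matches both. The proof should therefore be no longer than two or three sentences citing Lemma 3.3 and Lemma 3.4.
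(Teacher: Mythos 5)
Your proposal is correct and follows exactly the same route as the paper: cite Lemma 3.3 for the lower bound $pd(K_n \odot W_n) \geq n$ and Lemma 3.4 for the upper bound $pd(K_n \odot W_n) \leq n$, then combine them. In fact your write-up is slightly cleaner than the paper's, whose final sentence mistakenly concludes ``$pd(K_n \odot W_n) \leq n$'' where equality is meant.
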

\begin{proof}
	Based on Lemma 3.3, it is obtained that $pd(K_n \odot W_{n}) > n-1$.And based on Lemma 3.4, it is obtained that $pd(K_n \odot W_{n}) \leq  n$.. thus it can be concluded that $pd(K_n \odot W_{n}) \leq  n$ for $n \geq 3$.
\end{proof}
\begin{theorem}
	For Every $n, n \geq 3$
	\begin{align*}
		pd(K_n \odot W_{n+1}) =\begin{cases}
			4, n=3 \\
			n, n \geq 4
		\end{cases}
	\end{align*} 
\end{theorem}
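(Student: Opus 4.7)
The proof splits along the piecewise definition. For $n\geq 4$, the statement is immediate: Lemma 3.3, applied with $m=n+1\geq n$, yields $pd(K_n\odot W_{n+1})\geq n$, while Lemma 3.5 yields $pd(K_n\odot W_{n+1})\leq n$, so the two bounds match. No work beyond invoking these two earlier lemmas is needed in this regime.

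For $n=3$ the claimed value is $4$, and Lemma 3.3 supplies only the weaker bound $pd(K_3\odot W_4)\geq 3$, so both bounds must be established separately. I would first handle the upper bound by exhibiting an explicit ordered $4$-partition $\Pi=\{S_1,S_2,S_3,S_4\}$ of $V(K_3\odot W_4)$. Following the template of Lemma 3.5, I would place $u_i$ together with the center $v_{i,0}$ and a short arc of the $W_4$-cycle of the block $V(W_4^{i'})$ inside a single class, and spread the remaining cycle vertices across the other three classes so that vertices from different blocks receive different zero-patterns in their representations and, inside each block, the two pairs of strong equivalent cycle vertices described below are separated. The verification that $r(x|\Pi)\neq r(y|\Pi)$ for all distinct $x,y$ then reduces to comparing the zero-patterns across blocks and a small number of coordinates within each block.

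The main obstacle is the lower bound $pd(K_3\odot W_4)\geq 4$, since this strict improvement over Lemma 3.3 rests on a structural feature specific to $W_4$. The key observation is that in each wheel $W_4^i$ the opposite cycle pairs have coinciding open neighborhoods: $N(v_{i,1})=N(v_{i,3})=\{u_i,v_{i,0},v_{i,2},v_{i,4}\}$ and $N(v_{i,2})=N(v_{i,4})=\{u_i,v_{i,0},v_{i,1},v_{i,3}\}$. Identical neighborhoods force $d(v_{i,1},w)=d(v_{i,3},w)$ for every $w\notin\{v_{i,1},v_{i,3}\}$, and likewise for the second pair, so each such pair satisfies condition~1 of Definition 2.1 and is strong equivalent. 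By Lemma 3.1 the two members of each pair must then lie in distinct classes of any resolving partition, producing six independent separation constraints. Assume for contradiction that $\Pi=\{S_1,S_2,S_3\}$ is a resolving $3$-partition. Combining the six constraints above with the same-level relations furnished by Proposition 3.2 and Lemma 3.2 on the triples $\{u_1,u_2,u_3\}$, $\{v_{1,0},v_{2,0},v_{3,0}\}$, and the cross-block families $\{v_{1,j},v_{2,j},v_{3,j}\}$ for $j\in\{1,2,3,4\}$, a finite case analysis on how $u_1,u_2,u_3$ are distributed among $S_1,S_2,S_3$ and on how each strong-equivalent pair is split shows that some same-level pair is always forced into identical representation, contradicting the assumption. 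The bookkeeping of this case analysis is the most delicate step; once it is carried out, combining the matching bounds gives $pd(K_3\odot W_4)=4$, completing the proof.
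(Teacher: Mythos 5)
Your proposal follows essentially the same route as the paper: for $n\geq 4$ both arguments simply combine the lower bound from Lemma 3.3 with the upper bound from the lemma on $K_n\odot W_{n+1}$, and for $n=3$ both obtain $pd(K_3\odot W_4)\geq 4$ from the strong equivalence of the opposite rim pairs $\{v_{i,1},v_{i,3}\}$ and $\{v_{i,2},v_{i,4}\}$ (via Corollary 2.1, Lemma 3.1, and the same-level machinery of Proposition 3.2 and Lemma 3.2) together with a case analysis on $3$-partitions, then get $pd(K_3\odot W_4)\leq 4$ by exhibiting an explicit $4$-partition. The only difference is one of completeness: the paper writes out the concrete partition $\Pi'=\{S_1',S_2',S_3',S_4'\}$ and all resulting representations, whereas you defer both the explicit partition and the case-analysis bookkeeping.
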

\begin{proof}
	For $n=3$\\
	Let graph $K_3 \odot W_4$ with $V(K_3 \odot W_4)=\lbrace u_i,v_{i,j}:1\leq i \leq 3,0\leq j\leq 4\rbrace $ and $E(K_3 \odot W_4)=\lbrace u_1 u_2,u_1 u_3,u_2 u_3\rbrace$ $\cup \lbrace u_i v_{i,j}:1\leq i\leq 3,0\leq j\leq 4 \rbrace$ $\cup \lbrace v_{i,0} v_{i,j} : 1\leq i\leq 3,1\leq j\leq 4\rbrace$  $\cup \lbrace v_{i,j} v_{i,j+1}:1\leq i\leq 3,1\leq j\leq 4 \rbrace$ $\cup \lbrace v_{i,1} v_{i,n+1}:1\leq i\leq 3 \rbrace $.

We will show that $pd(K_3 \odot W_4 )=4$. Let $\Pi= \lbrace S_1,S_2,S_3 \rbrace$ be an ordered partition of $V(K_3 \odot W_4 )$ and for every $x,y\in{V(K_3 \odot W_4 )}$ partitioned arbitrarily to $S_1,S_2$, and $S_3$. Based on Proposition 3.2, for every $i\in{\lbrace 1,2,3 \rbrace }$, the vertices $v_{i,1}$ and $v_{i,3}$ are the strong equivalent. Using the same proposition, we have the vertices $v_{i,2}$ and $v_{i,4}$ are also the strong equivalent and by Corollary 2.1, $N(v_{i,1} )=N(v_{i,3} )$ and $N(v_{i,2} )=N(v_{i,4} )$. Consider the following cases.

\textbf{Case 1} If there is a partition class on $\Pi$ that contais two strong equivalent vertices, namely $v_{i,1},v_{i,3}\in {S_i }$ or $v_{i,2},v_{i,4}\in {S_i}$, then there are two strong equivalent vertices which is in the same partition class on $\Pi$. Consequently, based on Proposition 3.1, we have $\Pi $ is not a resolving partition of graph $K_3 \odot W_4$.

\textbf{Case 2} If for any $i\in{\lbrace 1,2,3 \rbrace}$, the vertices $v_{i,a},v_{i,b}\in {S_i}$ with $a,b\in{\lbrace 0,1,2,3,4 \rbrace}$ and $v_{i,a},v_{i,b}$ are not strong equivalent vertices, then :

\begin{itemize}
\item if for any $j\in{\lbrace 1,2,3 \rbrace}$,$S_j$ contains the neighbors of $v_{i,a}$ and $v_{i,b}$ then $d(v_{i,a},S_j )=d(v_{i,b},S_j )$ $=1$,  so $r(v_{i,a}|\Pi)=r(v_{i,b}|\Pi)$. Therefore, $\Pi$ is not a resolving partition of graph $K_3 \odot W_4$.
\item if there exist $j\in{\lbrace 1,2,3 \rbrace}$,$S_j$ does not contain the neighbors of $v_{i,a}$ and $v_{i,b}$, then there are at least two level vertices, namely $v_{i,a}$ and $v_{j,b}$ which are in the same partition class on $\Pi$. Said $v_{i,a},v_{j,b}\in{S_j}$. So that, for any $l\in{\lbrace 1,2,3 \rbrace}$ we can choose $c\in{V(K_3 \odot W_4 )},c\in{S_l}$, such that $d(v_{i,a},c)=k$, where $k=\min {\lbrace d(v_{i,a},a):a\in{S_l} \rbrace}$. Since the vertices $v_{i,a}$ and $v_{j,b}$ are same level vertices on $K_3 \odot W_4$, then there exist $c^{'}\in{V(K_3 \odot W_4 )}$,$c^{'}\in{S_l}$, such that $d(v_{j,b},c^{'})=k$. Based on Lemma 3.1, we have $\Pi$ is not a resolving partition of graph $K_3 \odot W_4$.
\end{itemize}

So, because there are only three partition classes, then each partition class on $\Pi$ contains a pair of equivalent vertices or each neighbor of the pair of equivalent vertices is in the same partition class. In this case, there are at least two vertices, namely $v_{i,a}$ and $v_{i,b}$ where $a,b\in{\lbrace 0,1,2,3,4 \rbrace}$ are in the same partition class. Therefore, $\Pi$ is not a resolving partition of graph $K_3 \odot W_4$, we have $pd(K_3 \odot W_4 ) \neq 3$. Consequently, the lower bound of the partition dimension of the graph $K_3 \odot W_4$ is $pd(K_3 \odot W_4 )\geq4 $.

Next define an ordered partition $\Pi^\prime= \lbrace S_1^\prime,S_2^\prime,S_3^\prime,S_4^\prime \rbrace $ of $V(K_3 \odot W_4 )$. Defined :
$S_1^\prime= \lbrace u_2,u_3,v_{2,0},v_{3,0},v_{2,1},v_{2,2},v_{3,1},v_{3,2 } \rbrace$, $S_2^\prime=\lbrace u_1,v_{1,0},v_{1,1},v_{1,2},v_{3,3} \rbrace $, $S_3^\prime= \lbrace v_1,3,v_{2,3} \rbrace $,$S_4^\prime= $ $\lbrace v_{1,4},$ $v_{2,4},v_{3,4} \rbrace $. Now, we wiil investigate the representation of the vertices on graph $K_3 \odot W_4$ with respect to $\Pi^\prime$. We have $r(u_1|\Pi^\prime)= \lbrace 1,0,1,1 \rbrace$; $r(u_i|\Pi^\prime)= \lbrace 0,1,i-1,1 \rbrace$,  for  $i=2,3$; $r(v_{1,0}|\Pi^\prime)= \lbrace 2,0,1,1 \rbrace$; $r(v_{i,0} |\Pi^\prime)= \lbrace 0,3-i,i,1 \rbrace$, for $i=2,3$; $r(v_{1,j}|\Pi^\prime)= \lbrace 2,0,3-j,j \rbrace $, for $j=1,2$; $r(v_{i,j}|\Pi^\prime)= \lbrace 3-i,i,j-3,4-j \rbrace$, for $i=1,2$ and $j=3,4$; $r(v_{i,1} |\Pi^\prime)= \lbrace 0,2,i,1 \rbrace $, for $i=2,3$; $r(v_{i,2} |\Pi^\prime)= \lbrace 0,4-i,2i-3,2 \rbrace $, for $i=2,3$; and $r(v_{3,j}|\Pi^\prime)= \lbrace 1,j-3,3,4-j \rbrace$, for $j=3,4$.

We can see that for every two vertices $x,y\in{V(K_3 \odot W_4 )}$, we have $r(x|\Pi^\prime)\neq r(y|\Pi^\prime)$. Therefore $\Pi^\prime$ is a resolving partition of graph $K_3 \odot W_4$. Consequently, the upper bound of the partition dimension of the graph $K_3 \odot W_4$ is $pd(K_3 \odot W_4 )\leq 4$. Based on the lower bound and upper bound of of the graph $K_3 \odot W_4$, we get $pd(K_3 \odot W_4 )=4$.\\
	Else, based on Lemma 3.3, it is obtained that $pd(K_n \odot W_{n}) > n-1$.And based on Lemma 3.4, it is obtained that $pd(K_n \odot W_{n}) \leq  n$.. thus it can be concluded that $pd(K_n \odot W_{n}) \leq  n$ for $n \geq 4$.
\end{proof}
\begin{theorem}
	For Every $n, n \geq 3$
\begin{align*}
	pd(K_n \odot W_{n+1}) =\begin{cases}
		4, n=3 \\
		n, n \geq 4
	\end{cases}
\end{align*} 
\end{theorem}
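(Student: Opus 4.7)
The plan is to split the statement into two regimes and dispose of the easy regime by invoking already-established lemmas, then give a self-contained argument in the small case $n=3$.

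For $n\geq 4$, the proof is a one-line synthesis: Lemma 3.3 gives $pd(K_n\odot W_{n+1})\geq n$ (the general lower bound, applied with $m=n+1\geq n$), while Lemma 3.5 gives the matching upper bound $pd(K_n\odot W_{n+1})\leq n$ by exhibiting the resolving partition $\Pi'=\{S_1',\dots,S_n'\}$. Equality follows immediately, so nothing substantial is left to do in this range.

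For $n=3$, the harder range, I would prove $pd(K_3\odot W_4)=4$ by separately establishing $pd(K_3\odot W_4)\geq 4$ and $pd(K_3\odot W_4)\leq 4$. For the lower bound, I first observe that in each wheel copy $W_4^i$, the cycle $C_4$ has pairs of opposite vertices $v_{i,1},v_{i,3}$ and $v_{i,2},v_{i,4}$ with $N(v_{i,1})=\{u_i,v_{i,0},v_{i,2},v_{i,4}\}=N(v_{i,3})$ and similarly $N(v_{i,2})=N(v_{i,4})$; invoking Proposition 3.2 (or directly Corollary 2.1) these are strong equivalent. Lemma 3.1 therefore forces $v_{i,1}$ and $v_{i,3}$ into different classes, and likewise for $v_{i,2}$ and $v_{i,4}$, in every $i\in\{1,2,3\}$. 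Assuming for contradiction that $\Pi=\{S_1,S_2,S_3\}$ is resolving, I would then show by case analysis that with only three classes the pigeonhole forces some class to contain two vertices $v_{i,a},v_{i,b}$ in a single copy, and then either (i) the copy of Lemma 3.1 together with the strong equivalence already gives a collision, or (ii) applying Proposition 3.2 and Lemma 3.2 to same-level vertices $v_{i,a},v_{j,b}$ across copies produces two vertices with identical representations. This mirrors the two-case analysis used in Lemma 3.3 but must additionally track the strong-equivalence constraints, which is what rules out $k=3$ even though Lemma 3.3 alone only rules out $k=n-1=2$.

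For the upper bound, I would exhibit the explicit 4-partition
\[
S_1'=\{u_2,u_3,v_{2,0},v_{3,0},v_{2,1},v_{2,2},v_{3,1},v_{3,2}\},\ S_2'=\{u_1,v_{1,0},v_{1,1},v_{1,2},v_{3,3}\},\ S_3'=\{v_{1,3},v_{2,3}\},\ S_4'=\{v_{1,4},v_{2,4},v_{3,4}\},
\]
and verify that the induced coordinate vectors $r(x\mid\Pi')$ are pairwise distinct. This is the routine bookkeeping step: compute $r(u_i\mid\Pi')$, $r(v_{i,j}\mid\Pi')$ for each $i\in\{1,2,3\}$ and $j\in\{0,1,2,3,4\}$ using the distances already encoded in the corona structure (distance $1$ to $u_i$ and $v_{i,0}$ from any $v_{i,j}$, distance $2$ across different blocks through $u_iu_j$, etc.), and read off that the distinguishing coordinate is always present --- different blocks are separated by the $0/1/2$ pattern in the $K_3$-hub coordinates, and within a block the opposite-vertex pairs on the cycle are split across $S_3'$ vs.\ $S_4'$ by construction.

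The main obstacle is the lower bound for $n=3$: Lemma 3.3 alone only excludes $k\leq n-1=2$, so I need a stronger pigeonhole than the one used there, combining the strong-equivalence constraints from the $W_4$ antipodal pairs with the same-level constraints across the three blocks simultaneously. Organizing the case split so that every placement of $\{v_{i,1},v_{i,3}\}$ and $\{v_{i,2},v_{i,4}\}$ across only three classes eventually forces either two strong-equivalent vertices together (contradicting Lemma 3.1) or two same-level vertices with the minimum-distance property of Lemma 3.2 together (contradicting resolvability) is the delicate step; the rest is computation.
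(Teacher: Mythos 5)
Your proposal is correct and follows essentially the same route as the paper's proof of this statement: the same split into $n\geq 4$ (Lemma 3.3 for the lower bound, Lemma 3.5 for the matching upper bound) and $n=3$, the same lower-bound argument from the strong equivalence of the antipodal pairs $v_{i,1},v_{i,3}$ and $v_{i,2},v_{i,4}$ in each $W_4$ combined with the same-level/Lemma 3.2 case analysis, and even the identical explicit four-class partition $\Pi'$ for the upper bound. One caution: the paper states this theorem twice, and the second copy's proof actually works with $K_3\odot W_5$ and invokes Lemma 3.6, i.e.\ it is really about $W_{n+2}$ with a typo in its statement, so your argument matches the paper's treatment of the $W_{n+1}$ case (the claim as literally written) rather than that second proof.
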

\begin{proof}
	For $n=3$,
	Let graph $K_3 \odot W_5$ with $V(K_3 \odot W_5)=\{V(k_3) \cup {\cup_{i \in V(K_3)V(W^i_3)}}\}$ and $E(K_3 \odot W_5)=\{E(k_3) \cup \bigcup_{i \in E(K_3)V(W^i_3)} \cup \{u_i,v_{i,j}| 1 \leq i \leq 3, 0 \leq j \leq 5\}$ will be shown $pd(K_3 \odot W_{5}) >3$\\
	
	Let a partition $\prod$ of the graph $K_3 \odot W_5$ be constructed such that $\prod=\{S_1, S_2, S_3\}$ with all vertices $K_3 \odot W_5$ partitioned arbitrary into $\prod$. Take $V(W^{i'}_5)$ for any $i \in \{1,2,3\}$. Based on Proposition 4.1.1, there are at least five weakly equivalent vertices. Since there are only three partition classes in $\prod$, there must be two weakly equivalent vertices in the same partition class, say $v{i,k}$ and $v_{i,j}$ where $j,k \in \{1,2,3,4,5\}$ are in $S_j$. Consider the following cases:\\
	
	\textbf{Case I} $v{i,k}$ and $v_{i,j}$ are adjacent vertices, say  $v{i,k}$ and $v_{i,k+1(\operatorname{mod} 5)}$
	\begin{itemize}
		\item [$\circ$]  If the neighbors of $v{i,k}$ and $v_{i,k+1(\operatorname{mod} 5)}$ i.e  $v_{i,k-1(\operatorname{mod} 5)}$ and  $v_{i,k+2(\operatorname{mod} 5)}$ 	are in the same partition class, say $S_m$, then according to Lemma 3.2, $\prod$ is not a resolving set. 
		\item [$\circ$]  If the neighbors of $v{i,k}$ and $v_{i,k+1(\operatorname{mod} 5)}$ are in different partition class, say $S_m$ and $S_n$, then $r(v_{i,k}|\prod' )=r(v_{i,k-1(\operatorname{mod} 5)}|\prod ')=(0,2,1)$, so $\prod$ is not a resolving set. Let $v_{i,k-1(\operatorname{mod} 5)}$ is in $S_l$. Considering $u_i$ and $v_{i,0}$ are two strongly equivalent vertices on the block, the partition set $\prod$ is not a resolving partition if $u_i$ and $v_{i,0}$ are in the same partition class. Suppose $u_i$ and $v_{i,0}$ are in a different partition, say $u_i$ in $S_m$. If $v_{i,0}$ in $S_n$, $r(v_{i,k}|\prod')=r(u_i|\prod ')=(0,1,1)$ and $S_l$, $r(v_{i,k+2}|\prod')=r(u_i|\prod ')=(0,1,1)$
	\end{itemize}

\textbf{Case II} $v{i,k}$ and $v_{i,j}$ are non-adjacent vertices, say  $v{i,k}$ and $v_{i,k+1(\operatorname{mod} 5)}$\\
Consider the remaining three weakly equivalent vertices to be partitioned into $\prod$, say $v_{i,k-2},v_{i,k-1}$ and $v_{i,k+1}$.If any of these three vertices are partitioned into $s_l$ then there will be two weakly equivalent vertices partitioned into the same partition class, returning to Case 1, making $\prod$ not a resolving set. The remaining three vertices must be partitioned into $S_m$ and $S_n$ with no two adjacent vertices partitioned into the same class. Suppose $v_{i,k+1}$ and $v_{i,k-2}$  in $S_n$ and $,v_{i,k-1}$  in $S_m$. Considering $v_{i,0}$ is adjacent to all five weakly equivalent vertices, if $v_{i,0}$  is partitioned into a class, then the distance to the other partitions is always 1, or $d(v_{i,0}, S_m)=d(v_{i,0}, S_n)=1$.In this case, there will always be a vertex in each partition class with a distance of 1 to the other partition classes, so there will always be $x \in \{v_{i,k-2},v_{i,k-1}, v_{i,k}, v_{i,k+1},v_{i,k+2}\}$ such that $r(x|\prod')=r(v_{i,0}|\prod ')=(0,1,1)$. Thus, it can be said that any partition set consisting of only three partition classes always has two vertices with the same representation, making $\prod$ where $|\prod|=3$ not a resolving partition set. Therefore, $pd(K_3 \odot W_5)>3$.\\
We defined partition $\prod=\{S_1, S_2, S_3, S_4\}$ bellow\\
	$ S_1=\{ v_{1,0}, v_{1,1}, v_{1,2}, v_{1,5}, v_{2,0}, v_{2,1}, v_{2,2}, v_{2,5}, u_1, u_2\}$ \\
$ S_2=\{v_{1,4}, v_{1,5}, v_{2,4}, v_{3,4}\}$ \\
$ S_3=\{v_{3,0}, v_{3,1}, v_{3,2}, v_{3,5}, u_3\}$ \\
$ S_2=\{v_{3,3}, v_{2,3}\}$ \\
Easily we can check for the representation for every vertex of graph $K_3 \odot W_5$  respect to $\prod'$ have a different representation. Thus $\prod'$ is a resolving set and also $pd(K_3 \odot W_5)\leq 4$. Consequently $pd(K_3 \odot W_5)=3$.

Based on Lemma 3.3, it is obtained that $pd(K_n \odot W_{n+1})> n-1$. And based on Lemma 3.6, it is obtained that $pd(K_n \odot W_{n+1})\leq >n-1$. thus it can be concluded that $pd(K_n \odot W_{n+1})=n$.for $n\geq 4$. 
\end{proof}

%%%%%%%%%%%%%%%%%%%%%%%%%%%%%%%%%%%%%%%%
\footnotesize

\end{document}